\documentclass[12pt,reqno]{amsart}
\pdfoutput=1
\usepackage{a4wide}
\usepackage{subfigure}

\usepackage{amssymb,amsmath,amsthm,newlfont,enumerate}

\def\paragraph#1{{\bf #1\ }}

 \def\OO{\rm \hbox{O\kern-.34em\raise.47ex
         \hbox{$\scriptscriptstyle |$}\kern-.46em\raise.47ex
         \hbox{$\scriptscriptstyle |$}\kern+0.5 em }}

\def\hcboxcm#1#2{\hbox to #1{\hfill #2 \hfill}}

\def\null{\hbox{}}
\let\eps\varepsilon

\def\tn1{\widetilde n_1}
\def\tn2{\widetilde n_2}
\def\tn{\widetilde n }

\let\ds\displaystyle

\def\be{\begin{equation}}
\def\ee{\end{equation}}
\def\bea{\begin{eqnarray}}
\def\eea{\end{eqnarray}}
\def\bean{\begin{eqnarray*}}
\def\eean{\end{eqnarray*}}

\def\qquad{{\quad\quad}}

\def\={\, = \, }

\def\Box{\leavevmode\vbox{\hrule
     \hbox{\vrule\kern4pt\vbox{\kern4pt}%
           \vrule}\hrule}}
\def\blackbox{\leavevmode\vrule height 5pt width 4pt depth 0pt\relax}
\catcode`@=11

\def\eqalign#1{\null\,\vcenter{\openup1\jot \m@th
   \ialign{\strut \hfil$\displaystyle{##}$ & $\displaystyle{{}##}$\hfil
      \crcr#1\crcr}}\,}
\def\eqalignrll#1{\null\,\vcenter{\openup1\jot \m@th
   \ialign{\strut \hfil$\displaystyle{##}$ & $\displaystyle{{}##}$\hfil
    & $\displaystyle{{}##}$\hfil
      \crcr#1\crcr}}\,}
\def\eqalignrcl#1{\null\,\vcenter{\openup1\jot \m@th
   \ialign{\strut \hfil$\displaystyle{##}$ &\hfil $\displaystyle{{}##}$\hfil
    & $\displaystyle{{}##}$\hfil
      \crcr#1\crcr}}\,}
\def\eqalignno#1{\displ@y \tabskip\@centering
  \halign to\displaywidth{\hfil$\@lign\displaystyle{##}$\tabskip\z@skip
    &$\@lign\displaystyle{{}##}$\hfil\tabskip\@centering
    &\llap{$\@lign##$}\tabskip\z@skip\crcr
    #1\crcr}}
\newcounter{appendix}
\newcounter{sectionz}
\setcounter{appendix}{0}
\def\appendix{\advance\c@appendix by 1
   \def\thesectionz {\Alph{appendix}}
\def\thesection{\Alph{appendix}} 
   \ifnum\c@appendix=1 \setcounter{section}{-1} \fi
   \@startsection {section}{1}{\z@}{-3.5ex plus -1ex minus 
  -.2ex}{2.3ex plus .2ex}{\large\bf}}

\catcode`@=12
%
\newtheorem{lemme}{Lemma}[section]  
\newtheorem{theorem}[lemme]{Theorem}
\newtheorem{corollary}[lemme]{Corollary}
\newtheorem{definition}[lemme]{Definition}
\newtheorem{proposition}[lemme]{Proposition}
\newtheorem{remark}[lemme]{Remark} 

\newtheorem{hypothesis}{Hypothesis}

\def\deblem{\begin{lemme}\it }
\def\finlem{\end{lemme}}
\def\debthm{\begin{theorem}\it }
\def\finthm{\end{theorem}}
\def\debprop{\begin{proposition} \it}
\def\finprop{\end{proposition}}
\def\debcor{\begin{corollary}\it }
\def\fincor{\end{corollary}}
\def\debdef{\begin{definition}\it}
\def\findef{\end{definition}}
\def\debrem{\begin{remark}\em}
\def\finrem{\null\hfill\blackbox\end{remark}}
\newtheorem{thm}{Theorem}  
\newtheorem{lem}[thm]{Lemma}

\def\OO{\mathbb{O}}

\usepackage{amsfonts}
\usepackage{amsbsy}

\usepackage{graphicx}
\usepackage[hang,center]{caption}
\usepackage{verbatim} 
\usepackage{float}
\usepackage{color}

\usepackage{multirow}

\begin{document}

\title[AP-scheme for the anisotropic heat equation]{Highly anisotropic
  temperature balance equation and its asymptotic-preserving
  resolution.}

\author[A. Lozinski, J. Narski, C. Negulescu]{Alexei Lozinski, Jacek Narski, Claudia NEGULESCU}

\address{Universit\'e de Toulouse, UPS, INSA, UT1, UTM, Institut de
  Math\'ematiques de Toulouse, 118 route de Narbonne, F-31062
  Toulouse, France} 
 \email{alexei.lozinski@math.univ-toulouse.fr,  jacek.narski@math.univ-toulouse.fr, claudia.negulescu@math.univ-toulouse.fr}
\date{\today}

\begin{abstract}
This paper deals with the numerical study of a nonlinear, strongly
anisotropic heat equation. The use of standard schemes in this situation leads to poor results, due to the high anisotropy. An Asymptotic-Preserving method is
introduced in this paper, which is second-order accurate in both, temporal and
spacial variables. The discretization in time is done using an L-stable Runge-Kutta scheme. The convergence of the method is shown to be independent of the anisotropy parameter $0 <
\eps <1$, and this for fixed coarse Cartesian grids and for variable
anisotropy directions. The context of this work are magnetically
confined fusion plasmas.
\end{abstract}

\keywords{Anisotropic elliptic equation, Numerics, Ill-conditioned
  problem, Singular Perturbation Model, Limit Model, Asymptotic
  Preserving scheme}.

\maketitle
\section{Introduction} \label{SEC1}
Magnetically confined plasmas are characterized by highly anisotropic
properties induced by the applied strong magnetic
field. Indeed, the charged particles constituting the plasma move rapidly around 
the magnetic field lines, their transverse motion away from the field lines
being constrained by the Lorentz force. In contrast, their
motion along the field lines is relatively unconstrained, so that rather rapid
dynamics along the magnetic fields occurs. This results in an extremely
large ratio of the parallel to the transverse thermal
conductivities, as well as of other parameters characterizing the plasma
evolution. 

A prototype simplified model for the heat diffusion in a magnetically confined plasma can be expressed by the
following nonlinear, degenerate parabolic equation
\be \label{DT}
\partial_t u -  \nabla_{||} \cdot (\kappa_{||}( u) \nabla_{||} u) -\nabla_{\perp} \cdot (\kappa_{\perp} \nabla_{\perp} u)=0\,,
\ee
where the subscripts $||$ (resp. $\perp$) refer to the direction parallel (resp. perpendicular) to the magnetic field
lines and $u$ designates the temperature. In writing out the equation above we have ignored some important physical phenomena coming from convection and turbulence.  
Nevertheless, our equation contains some important features inherited from the full model that lead to substantial difficulties in the numerical treatment of both the full model and our simplified one. The diffusion in the direction perpendicular to the magnetic field lines is usually dominated by the anomalous transport and the corresponding coefficient $\kappa_\perp$ can be taken temperature independent. On the other hand, the coefficient describing the diffusion in the direction parallel to the magnetic field lines, $\kappa_{||}$, is normally much larger and strongly temperature dependent. It can be described by the Spitzer-H\"arm law $\kappa_{||}(u)= \kappa_0 u^{5/2}$ \cite{Wes87}. Moreover, plasma temperatures are extremely high, so that this diffusion coefficient can become very big.  Passing to non-dimensional variables, we shall write therefore the law for $\kappa_{||}$ as 
$$\kappa_{||}(u)= {1\over \eps} u^{5/2},$$ 
where $\eps$ is a small parameter, $0 < \eps \ll 1$. An accurate resolution
of the parallel and perpendicular diffusion processes plays a crucial
role in understanding of the plasma dynamics and the
energy transport phenomena. It is therefore very important to develop and to study efficient
numerical schemes to solve problem  (\ref{DT}). It is also desirable to have a scheme that works robustly for all values of $\eps$
from $\eps<<1$ to $\eps\sim O(1)$ since this parameter enters the equation in combination with a non-linear term so that the effective value of the diffusion coefficient can vary strongly over the computation domain following the variations in $u$. This is the primary motivation of the present work.

Anisotropic, nonlinear diffusion equations of the type (\ref{DT})
arise in several other fields of application and a lot of efforts were made
to construct efficient numerical methods for this challenging
problem. To mention some examples, such non-linear evolution equations
of parabolic type occur in the description of isentropic gas flows
through a porous media \cite{Ashby} or in the
description of transport phenomena in heterogeneous geologic
formations, such as fractured rock systems \cite{Berkowitz}, which are
of fundamental interest for petroleum or groundwater engineering. In
addition, these equations appear also in image processing, related to
the elimination of noise and small-scale details from an image
\cite{Basser, Perona, Weickert} or in the description of the
anisotropic water diffusion in tissues of the nervous system
\cite{Beaulieu}.

From a numerical point of view, problems of the type (\ref{DT}) are
very challenging, as one deals with singularly perturbed
problems, the model changing its type in the limit $\eps \rightarrow 0$. Standard schemes suffer from the presence of very ill conditionned matrices (typically with a condition number of order $1/(\eps h^2)$ where $h$ is the discretization step in space). Solving an equation with such a matrix on a computer accumulates the rounding errors and may lead to completely wrong results. Note that this drawback cannot be overcome by a mesh refinement since it results only in worsening the condition numbers of the matrices in the discretized problem.

Several methods were investigated in literature to cope with this type of
anisotropic problems, using for example high order finite element schemes
\cite{G}, preconditioned conjugate gradient methods in a mixed
spectral/finite difference scheme \cite{L} or introducing an
artificial ``sound'' method, to represent the fast thermal
equilibrium along the field lines \cite{P}. All these methods however are rather involved and moreover their range of application is limited, as they are efficient only until a threshold value for $\eps$, and cannot thus recover the limit regime $\eps \rightarrow 0$. Another class of employed numerical methods are hybrid strategies, which consist in coupling different numerical schemes valid in different regions of the domain. For example in this case, one can couple the resolution of the singular perturbation problem there where $\eps \sim \mathcal{O}(1)$ with the resolution of a limit problem for $\eps \ll 1$. These methods suffer however from the fact, that the coupling conditions between the two models are hard to establish and the interface between the two regions difficult to localize.

The objective of the present paper is to introduce an efficient numerical
scheme based on the Asymptotic-Preserving methodology, which allows for an accurate resolution of the singularly perturbed problem, uniformly in $\eps$, with little additional computational cost, and using a grid which is not necessarily aligned with the magnetic field, so that one can exploit simple Cartesian grids, for example. Initially, AP-techniques were introduced in \cite{ShiJin}, to deal with singularly perturbed kinetic models. The key idea is to reformulate the singularly perturbed problem into an equivalent problem, which is however well-posed if we set $\eps=0$ there. The reformulation of the here proposed method is based, similarly as in \cite{MN}, on introducing a new auxiliary variable, as proposed earlier in an elliptic framework in \cite{DLNN},
and replacing the terms of the equation multiplied by $1/\eps$ are by the new terms with an $O(1)$ factor. From a numerical point of view, this procedure means transforming the problem of condition number $\sim 1/\eps$, into a well-conditioned problem, which switches automatically from the singularly perturbed problem to the limit problem, as $\eps \rightarrow 0$.

The difference between the method
presented in \cite{MN} lies first in the treatment of the
non-linearity. Instead of fixed point iterations used to approach the nonlinearity $(u^{n+1})^{5/2}$, we choose to implement a much simpler linear extrapolation
method (see Section \ref{SEC31} for details). Moreover, we develop here a robust asymptotic-preserving scheme of second order in time, which has no analogue in the existing literature, to the best of our knowledge. Finally, this paper contains also a detailed mathematical study of
the problem.

The paper is organized as follows: Section \ref{SEC2} contains a
description of the problem completed by a mathematical study. In Section
\ref{SEC3}, we present the numerical method based on an asymptotic preserving space discretization and 
develop three different time-discretizations: implicit Euler,
Crank-Nicolson and L-stable Runge-Kutta methods. Finally, in
Section~\ref{sec:numres} we present some numerical results, focusing on the AP-property of the schemes.

\section{Description of the problem and mathematical study} \label{SEC2}

We consider a two or three dimensional anisotropic, nonlinear heat problem, given on a
sufficiently smooth, bounded domain $\Omega \subset \mathbb R^d$,
$d=2,3$ with boundary $\Gamma$. The direction of the
anisotropy is defined by the time-independent vector field $b \in
(C^{\infty}(\Omega))^d$, satisfying $|b(x)|=1$ for all $x \in \Omega$.

\noindent Given this vector field $b$, one can decompose now vectors
$v \in \mathbb R^d$, gradients $\nabla \phi$, with $\phi(x)$ a scalar
function, and divergences $\nabla \cdot v$, with $v(x)$ a vector
field, into a part parallel to the anisotropy direction and a part
perpendicular to it.  These parts are defined as follows:
\begin{equation} 
  \begin{array}{llll}
    \ds v_{||}:= (v \cdot b) b \,, & \ds v_{\perp}:= (Id- b \otimes b) v\,, &\textrm{such that}&\ds
    v=v_{||}+v_{\perp}\,,\\[3mm]
    \ds \nabla_{||} \phi:= (b \cdot \nabla \phi) b \,, & \ds
    \nabla_{\perp} \phi:= (Id- b \otimes b) \nabla \phi\,, &\textrm{such that}&\ds
    \nabla \phi=\nabla_{||}\phi+\nabla_{\perp}\phi\,,\\[3mm]
    \ds \nabla_{||} \cdot v:= \nabla \cdot v_{||}  \,, & \ds
    \nabla_{\perp} \cdot v:= \nabla \cdot v_{\perp}\,, &\textrm{such that}&\ds
    \nabla \cdot v=\nabla_{||}\cdot v+\nabla_{\perp}\cdot v\,,
  \end{array}
\end{equation}
where we denoted by $\otimes$ the vector tensor product.

The boundary $\Gamma$ can be decomposed into three components following the sign of the
intersection with $b$: 
$$
\Gamma_{||}:= \{ x \in \Gamma \,\, / \,\, b(x) \cdot n(x) =0 \}\,, 
$$
$$
\Gamma_{in}:= \{ x \in \Gamma \,\, / \,\, b(x) \cdot n(x) <0 \}\,, \quad \Gamma_{out}:= \{ x \in \Gamma \,\, / \,\, b(x) \cdot n(x) > 0 \}\,, 
$$
and $\Gamma_{\perp}=\Gamma_{in}\cup \Gamma_{out}$. The vector $n$ is here the unit
outward normal on $\Gamma$.

With these notations we can now introduce the mathematical problem, we
are interested to study. We are searching for the particle (ions or
electrons) temperature $u(t,x)$, solution of the evolution equation
\be \label{P}
(P) \,\,\,
\left\{
\begin{array}{l}
\partial_t u - {1 \over \eps} \nabla_{||} \cdot (A_{||} u^{5/2} \nabla_{||} u) -\nabla_{\perp} \cdot (A_{\perp} \nabla_{\perp} u)=0\,, \quad \textrm{in} \quad [0,T] \times \Omega\,,\\[3mm]
{1 \over \eps} n_{||} \cdot (A_{||} u^{5/2}(t,\cdot) \nabla_{||} u(t,\cdot)) + n_{\perp} \cdot (A_{\perp} \nabla_{\perp} u(t,\cdot))= -\gamma \,  u(t,\cdot)\,, \quad \textrm{on} \quad [0,T] \times \Gamma_{\perp}\,, \\[3mm]
\nabla_{\perp} u (t,\cdot) =0 \,, \quad \textrm{on} \quad [0,T] \times \Gamma_{||}\,, \\[3mm]
u(0,\cdot)=u^{0} (\cdot)\,, \quad \textrm{in} \quad \Omega\,.
\end{array}
\right.
\ee
The coefficient $\gamma$ is zero for
electrons and $\gamma >0$ for ions \cite{Tam07, Wes87}. The problem (\ref{P}) describes
the diffusion of an initial temperature $u^{0}$ within the time interval
$[0,T]$ and its outflow through the boundary $\Gamma_{\perp}$. Let us denote in the
following the time-space cylinder by $Q_T:=(0,T)\times \Omega$. The
parameter $0<\eps\ll 1$ can
be very small and is responsible for the high anisotropy of the
problem.  
We shall suppose all along this paper, that the coefficients $A_{||}$ and $A_{\perp}$ are of the same order of magnitude, satisfying 

\begin{hypothesis}
\label{hypo} Let $\Gamma _{\perp }$ consist of two connected components $%
\Gamma _{in}=\{x\in \Gamma /n\cdot b<0\}$ and $\Gamma _{out}=\{x\in \Gamma
/n\cdot b>0\}$ such that\\
either

\textbf{case A:} $n=-b$ on $\Gamma _{in}$ and $n=b$ on $\Gamma _{out}$\\
or

\textbf{case B:} $n\cdot b>-\varkappa $ on $\Gamma _{in}$ and $n\cdot
b<\varkappa $ on $\Gamma _{out}$ with some constant $0<\varkappa <1.$\\
All the components $\Gamma _{in}$, $\Gamma _{out}$ and $\Gamma _{||}$ are sufficiently smooth.
We suppose moreover $0<\eps\leq 1$ and $\gamma \geq 0$ fixed. The diffusion
coefficients $A_{\parallel }\in W^{1,\infty}(\bar\Omega )$ and $A_{\perp }\in 
\mathbb{M}_{d\times d}(W^{1,\infty}(\bar\Omega ))$ are supposed to satisfy 
\begin{gather}
0<A_{0}\leq A_{\parallel }(x)\leq A_{1}\,,\quad \text{f.a.a.}\,\,\,x\in
\Omega ,  \label{eq:J48a1} \\[3mm]
A_{0}||v||^{2}\leq v^{t}A_{\perp }(x)v\leq A_{1}||v||^{2}\,,\quad \forall
v\in \mathbb{R}^{d}\,\,\,\text{and}\,\,\,\text{f.a.a.}\,\,\,x\in \Omega ,
\label{eq:J48a3}
\end{gather}%
with $0<A_{0}<A_{1}$ some constants.
\end{hypothesis}

Putting formally
$\eps =0$ in (\ref{P}) leads to the following ill-posed problem, admitting
infinitely many solutions 
\be \label{R}
\left\{
\begin{array}{l}
 - \nabla_{||} \cdot (A_{||} u^{5/2} \nabla_{||} u) =0\,, \quad \textrm{in} \quad [0,T] \times \Omega\,,\\[3mm]
n_{||} \cdot (A_{||} u^{5/2}(t,\cdot) \nabla_{||} u(t,\cdot)) = 0\,, \quad \textrm{on} \quad [0,T] \times \Gamma_{\perp}\,, \\[3mm]
\nabla_{\perp} u (t,\cdot) =0 \,, \quad \textrm{on} \quad [0,T] \times \Gamma_{||}\,, \\[3mm]
u(0,\cdot)=u^{0} (\cdot)\,, \quad \textrm{in} \quad \Omega\,.
\end{array}
\right.
\ee
Indeed, all functions which are constant along the field lines, meaning $\nabla_{||} u \equiv 0$, and satisfying moreover the boundary condition on $\Gamma_{||}$, are solutions of this problem. From a
numerical point of view, this ill-posedness in the limit $\eps \rightarrow 0$ can be detected by the fact, that trying to solve (\ref{P}) with standard
schemes leads to a linear system, which is very ill-conditioned for
$0<\eps\ll 1$, in particular with a condition  number of the order of
$1/ \eps$.  

The aim of this paper will be to introduce an efficient numerical
method, permitting to solve (\ref{P}) accurately on a coarse Cartesian
grid, which has not to be adapted to the field lines of $b$ and whose
mesh size is independent of the value of $\eps$. The here proposed
scheme belongs to the category of Asymptotic-Preserving schemes,
meaning they are stable independently of the small parameter $\eps$
and consistent with the limit problem, if $\eps$ tends to zero. The
construction of the here developed AP-scheme is an adaptation of a
method introduced by the authors in an elliptic framework (see
\cite{DLNN}), to the here considered non-linear and time-dependent
problem, and is based on a reformulation of the singularly perturbed problem (\ref{P}) into an equivalent problem, which appears to be well-posed in the limit $\eps \rightarrow 0$. But before introducing the AP-approach, we will start by studying in the following section the mathematical properties of problem (\ref{P}). The test configuration chosen all along this paper is the diffusion of an initial temperature hot spot (see
Figure \ref{ima1}) along arbitrary magnetic field lines $b$.
\begin{figure}[h]
\begin{center}
    \includegraphics{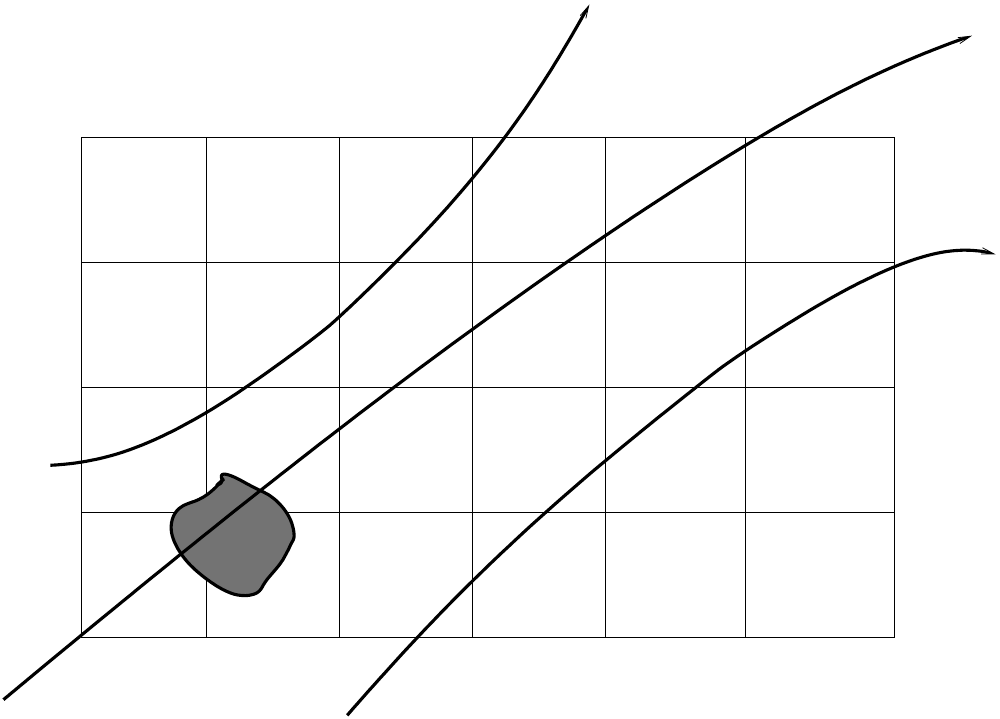}
\end{center}
\caption{\label{ima1} Diffusion of a hot temperature spot along the magnetic field lines.}
\end{figure}

\subsection{Mathematical properties} \label{SEC21}
Before starting with the presentation of the AP numerical scheme, let us first check some
properties of the diffusion problem (\ref{P}) for fixed $\eps >0$. 
For notational simplicity, we consider a slightly more general form of problem (P)%
\begin{equation}
(P_{m})\,\,\,\left\{ 
\begin{array}{l}
\partial _{t}u-\nabla _{||}\cdot (A_{||}|u|^{m-1}\nabla _{||}u)-\nabla
_{\perp }\cdot (A_{\perp }\nabla _{\perp }u)=0\,,\quad \text{in}\quad
\lbrack 0,T]\times \Omega \,, \\[3mm] 
A_{||}|u|^{m-1}n_{||}\cdot \nabla _{||}u+A_{\perp }n_{\perp }\cdot \nabla
_{\perp }u=-\gamma \,u\,,\quad \text{on}\quad \lbrack 0,T]\times \Gamma
_{\perp }\,, \\[3mm] 
\nabla _{\perp }u=0\,,\quad \text{on}\quad \lbrack 0,T]\times \Gamma _{||}\,,
\\[3mm] 
u(0,\cdot )=u^{0}(\cdot )\,,\quad \text{in}\quad \Omega \,,%
\end{array}%
\right.  \label{Pm}
\end{equation}%
for any $m\geq 1$. We obtain the particular case (\ref{P}) by setting $m=5/2+1$
and redefining $A_{||}$ as $\frac{1}{\varepsilon }A_{||}$ for any $%
\varepsilon >0$. Equations of the type (\ref{Pm}) are rather well
studied in the literature. We refer to the classical works
\cite{dubinsk64, dubinsk65, lions69} as well as to the more modern
literature on ``The porous medium equation" as reviewed in
\cite{Arons86, Vazquez}. However, all these references normally treat
only an isotropic version of the problem above, i.e. the non-linearity
of the type $u^{m-1}$ is present in front of all the derivatives of
$u$. An anisotropic equation of the form (\ref{Pm}) is studied in
\cite{jian06}, but only in the case $m<\frac{d+1}{d-1}$, so that the value of $m$
pertinent to our application is not covered. Another feature of our
setting, which is not sufficiently covered in the existing literature,
is the prescription of Robin boundary conditions. This is the reason
why we wish to study in this paper the existence, uniqueness and
positivity of solutions to (\ref{Pm}). 

We shall first introduce the concept of weak solution of
problem (\ref{Pm}) and state the existence/uniqueness theorem. Note
that unlike the literature cited above, we assume from the beginning
that the initial conditions are bounded and strictly positive, and
prove the same properties for the weak solutions. Our treatment is
thus performed under much less general assumptions than usually
required, but this is quite enough for our application.

\begin{definition} \textbf{(Weak solution)}
Let $u^{0}\in L^{\infty }(\Omega )$. Then $u\in \mathcal{W}$ with 
\begin{align*}
\mathcal{W}:=\{& u\in L^{\infty }(Q_{\infty}),\text{ such that }\forall \, T>0 
\\
&\,
\nabla _{\perp }u\in
L^{2}(Q_{T})\,,\quad |u|^{m-1}\nabla _{||}u\in L^{2}(Q_{T})\,,\quad \partial
_{t}u\in L^{2}(0,T;(H^{1}(\Omega ))^{\ast })\}\,,
\end{align*}%
is called a weak solution of problem (\ref{Pm}), if $u(0,\cdot )=u^{0}$ and if for all $T>0$ one has
\begin{eqnarray}
&&\int_{0}^{T}\langle \partial _{t}u(t,\cdot ),\phi (t,\cdot )\rangle
_{(H^{1})^{\ast },H^{1}}\,dt+\int_{0}^{T}\int_{\Omega }A_{||}|u|^{m-1}\nabla
_{||}u\cdot \nabla _{||}\phi\, dxdt  \label{weak} \\
&&+\int_{0}^{T}\int_{\Omega }A_{\perp }\nabla _{\perp }u\cdot \nabla _{\perp
}\phi\, dxdt+\gamma \int_{0}^{T}\int_{\Gamma _{\perp }}u\phi \,d\sigma
\,dt=0,\quad \forall \phi \in \mathcal{D}  \notag
\end{eqnarray}%
where $\mathcal{D}=L^{2}(0,T;H^{1}(\Omega ))$.\newline
\end{definition}

\begin{remark}\label{remTh}
All the terms in this variational formulation are well-defined for any $u\in 
\mathcal{W}$ and $\phi \in \mathcal{D}$. Indeed, as $u\in L^{\infty }(Q_{T})$, $\nabla _{\perp}u\in L^{2}(Q_{T})$
and $|u|^{m-1}\nabla _{||}u\in L^{2}(Q_{T})$, one also has $|u|^{m-1}\nabla
u\in L^{2}(Q_{T})$, and thus $|u|^{m}\in
L^{2}(0,T;H^{1}(\Omega ))$. This means that $|u|^{m}$ has a trace on $%
\partial \Omega $, belonging to $L^{2}(\partial \Omega )$. As $L^{2m}(\partial \Omega) \subset L^{2}(\partial \Omega)$ 
for all $m \ge 1$ ($\Omega$ is bounded), one has $u_{| \partial \Omega} \in L^{2}(\partial \Omega )$, justifying thus the boundary integral in (\ref{weak}). 
Moreover, we have the continuous inclusion $\mathcal{W}\subset C([0,T];(H^1(\Omega))^*)$. This shows that one can impose the initial condition $u(0,\cdot )=u^{0}$ with $u^{0} \in L^\infty(\Omega) \subset (H^1(\Omega))^*$.
\end{remark}

\begin{remark}\label{remTh1}
Actually, we have a sharper characterization of continuity in time for functions in $\mathcal{W}$. Indeed, let $Y$ be a Banach space and let us denote by $C([0,T];Y_w)$ the space of weakly continuous functions with values in $Y$, which means for each $\psi \in Y^*$ the mapping $t \longmapsto \langle \psi, u (t)\rangle_{Y^*,Y}$ is continuous. Then, one can prove (see \cite{LM} for more details) that
$\mathcal{W} \subset L^{\infty}(0,T;L^2(\Omega)) \cap C([0,T];(H^1(\Omega))^*) \subset C([0,T];L^{2}_w(\Omega))$.
\end{remark}

\begin{theorem}\label{mainTh} {\bf (Existence/Uniqueness/Positivity)}
Let $u^{0}\in L^{\infty }(\Omega )$ satisfy $0<\beta\leq u^{0}\leq M<\infty $ on $%
\Omega $, for some $\beta >0$. Assume Hypothesis \ref{hypo} and $m\geq 1$. Then there exists a unique
weak solution $u\in \mathcal{W}$ of (\ref{Pm}), which satisfies $c e^{-Kt} \leq u\leq
M$ a.e. on $Q_{\infty}$, with some sufficiently small $c>0$ and some sufficiently large $K>0$.
\end{theorem}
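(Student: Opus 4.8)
The plan is to establish existence, uniqueness and the two-sided bounds via the standard program for degenerate parabolic (porous-medium-type) equations, but specialized to the favorable situation where $u^0$ is bounded away from $0$ and $\infty$. First I would regularize the equation: replace the degenerate coefficient $|u|^{m-1}$ by a uniformly elliptic truncation, say $a_\delta(u) = (\max(|u|,\delta))^{m-1}$ (or equivalently work with a truncated nonlinearity $T_\delta(u)$ capped between $\beta/2$ and $2M$), obtaining a nondegenerate quasilinear parabolic problem $(P_m^\delta)$. For this regularized problem, existence and uniqueness of a weak solution in $L^2(0,T;H^1(\Omega)) \cap C([0,T];L^2(\Omega))$ with $\partial_t u \in L^2(0,T;(H^1)^*)$ follows from the classical Galerkin method (or from Lions' theorem on monotone/pseudomonotone operators), since the truncated operator is bounded, coercive on the $\nabla_\perp$-part together with the Robin term, and the $\nabla_{||}$-part supplies the missing parallel control.

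The key quantitative step — and the place where Hypothesis \ref{hypo} is actually used — is deriving the \emph{a priori} $L^\infty$ bounds $c e^{-Kt} \le u^\delta \le M$ uniformly in $\delta$. For the upper bound I would test the weak formulation with $\phi = (u^\delta - M)_+$: the degenerate parallel term and the perpendicular diffusion term are nonnegative, the Robin boundary term $\gamma \int u^\delta (u^\delta - M)_+$ is nonnegative on the set where $u^\delta > M \ge 0$, so $\frac{d}{dt}\tfrac12\|(u^\delta - M)_+\|^2_{L^2} \le 0$ and, since $(u^0 - M)_+ = 0$, we get $u^\delta \le M$ for a.e. $(t,x)$. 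The lower bound is the more delicate one: testing with $\phi = (u^\delta - c e^{-Kt})_-$ (the negative part) one picks up $-\int \partial_t u^\delta \cdot \phi$ plus a good term from the time derivative of the barrier $c e^{-Kt}$, while the diffusion terms again have the right sign on the bad set; the Robin term $-\gamma \int u^\delta\phi$ must be controlled, and this is exactly where the geometric dichotomy (case A: $n = \pm b$, so the parallel flux controls the boundary values; case B: $|n\cdot b| < \varkappa < 1$, so the perpendicular flux is not tangent to $\Gamma_\perp$ and a trace/Poincaré-type estimate applies) lets one absorb the boundary contribution. Choosing $K$ large enough (depending on $\gamma$, $A_1$, $\varkappa$, the Poincaré constants) and $c \le \beta$ closes the inequality, giving $u^\delta \ge c e^{-Kt}$.

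Once these bounds hold uniformly in $\delta$, on the region $\{ c e^{-Kt} \le u^\delta \le M\}$ the truncation $a_\delta$ coincides with the genuine coefficient $|u^\delta|^{m-1}$ for $\delta$ small (more precisely, for $t$ in any fixed $[0,T]$ one has $u^\delta \ge c e^{-KT} > 0$, so taking $\delta < c e^{-KT}$ removes the regularization entirely on that slab), so $u^\delta$ already solves $(P_m)$ there; a diagonal argument in $T$ then yields a global weak solution $u \in \mathcal{W}$, using the energy estimate (test with $\phi = u$ or with $|u|^{m-1}u$) to get the uniform bounds $\nabla_\perp u \in L^2(Q_T)$, $|u|^{m-1}\nabla_{||}u \in L^2(Q_T)$, $\partial_t u \in L^2(0,T;(H^1)^*)$ needed to pass to the limit, together with the Aubin–Lions compactness lemma for the strong $L^2(Q_T)$ convergence that handles the nonlinear term. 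For uniqueness, given two solutions $u_1, u_2$ with the same data and both trapped between $c e^{-Kt}$ and $M$, subtract the weak formulations and test with $u_1 - u_2$: the perpendicular term and the Robin term are monotone, and for the parallel term one uses that $s \mapsto |s|^{m-1}s$ is monotone increasing together with the uniform positivity to control $(|u_1|^{m-1}\nabla_{||}u_1 - |u_2|^{m-1}\nabla_{||}u_2)\cdot\nabla_{||}(u_1-u_2)$ — writing it via the primitive $\Psi(u) = \int_0^u |s|^{m-1}\,ds$ and the elementary inequality relating $(\Psi(u_1)-\Psi(u_2))$ to $(u_1-u_2)$ on the compact positive range — so that a Grönwall argument gives $u_1 = u_2$.

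The main obstacle I anticipate is the lower-bound barrier argument in \textbf{case B}: unlike case A, the normal to $\Gamma_\perp$ is not aligned with $b$, so the parallel flux does not directly govern the boundary trace, and one must trade the Robin term against a fraction of the perpendicular Dirichlet energy via a trace inequality whose constant degenerates as $\varkappa \to 1$; getting the signs and the constant $K$ right there (uniformly in $\delta$) is the technical heart of the theorem. Everything else is a careful but routine adaptation of the porous-medium/Galerkin machinery.
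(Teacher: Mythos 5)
Your overall architecture (regularize the degenerate coefficient, derive bounds uniform in the regularization, pass to the limit via Aubin--Lions, then establish the two-sided bounds and uniqueness) matches the paper's, and the upper bound via the test function $(u-M)_+$ is exactly what is done there. However, the two genuinely delicate points --- strict positivity and uniqueness --- are not closed by the mechanisms you propose. For the lower bound, testing with $\phi=(u-ce^{-Kt})_-\le 0$ makes the Robin term $-\gamma\int_{\Gamma_\perp}u\,\phi\,d\sigma$ \emph{nonnegative} (since $u\ge0$), so it works against you, and the only negative term available to absorb it is $cKe^{-Kt}\int_\Omega\phi\,dx$, an interior $L^1$ quantity; no trace inequality bounds $\int_{\Gamma_\perp}|\phi|\,d\sigma$ by $\int_\Omega|\phi|\,dx$ without gradient contributions, and the gradient energies at your disposal are useless precisely here: the parallel energy carries the weight $|u|^{m-1}$, which degenerates on the set $\{u<ce^{-Kt}\}$ where you need it, while in case A of Hypothesis \ref{hypo} the boundary $\Gamma_\perp$ is reached along the field lines, so the perpendicular Dirichlet energy gives no control of the trace there. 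The paper avoids this entirely: Lemma \ref{subsol} constructs an explicit smooth sub-solution (in case A, $w=\delta(t)(\sin(\pi\xi_d)+\eta(t))^{1/m}$ in field-aligned coordinates, with $\eta$ slaved to a power of $\delta$ so that the parallel flux of $w^m$, of size $\pi\delta^m/m$ at $\xi_d\in\{0,1\}$, dominates the Robin term $\gamma w$), and positivity then follows from the comparison principle of Lemma \ref{compPr}. That barrier construction is the real content of the positivity claim and is missing from your proposal.

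For uniqueness, testing the difference of the two weak formulations with $u_1-u_2$ produces in the parallel term the cross contribution $\int A_{||}\,(|u_1|^{m-1}-|u_2|^{m-1})\,\nabla_{||}u_2\cdot\nabla_{||}(u_1-u_2)\,dx$, which after Young's inequality leaves $\int_\Omega|u_1-u_2|^2|\nabla_{||}u_2|^2dx$; since $|\nabla_{||}u_2|^2$ is only integrable in space for a weak solution, Gr\"onwall does not apply, and the operator $u\mapsto-\nabla_{||}\cdot(A_{||}|u|^{m-1}\nabla_{||}u)$ is monotone in $H^{-1}$, not in $L^2$, so "monotonicity plus Gr\"onwall'' does not close. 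The paper's comparison principle handles exactly this cross term by testing with $H_k(u_1^m-u_2^m)$, a cutoff supported on $\{0<u_1^m-u_2^m<1/k\}$: strict positivity gives $u_1^{m-1}-u_2^{m-1}\le C_m/(k\beta)$ there, cancelling the factor $k$ from $H_k'$, and the remaining integral vanishes as $k\to\infty$ because the measure of that set tends to zero. You would need this device (or an equivalent $H^{-1}$ / doubling-of-variables argument) for both uniqueness and the transfer of the sub-solution bound to the actual solution.
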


Before proving this theorem, let us also define the sub- and super-solutions to problem (\ref{Pm}) and establish a comparison principle for them. 

\begin{definition} \textbf{(Sub/super-solutions)}
A function $u\in \mathcal{W}$ is called a weak sub- (resp. super-) solution to problem (\ref{Pm}) if the variational formulation (\ref{weak}) is verified 
for all $\phi \in \mathcal{D}$ with $\phi\ge 0$ on $Q_\infty$, and where the equality sign is replaced by $\le$ (resp. $\ge$).
\end{definition}

\begin{lem}\label{compPr} {\bf (Comparison principle)}
Assume Hypothesis \ref{hypo} and $m\geq 1$. Let $u_{1}$ be a non-negative
sub-solution and $u_{2}$ be a non-negative super-solution to (\ref{Pm}) such
that $u_{1}(0,x)\leq u_{2}(0,x)$ for a.a. $x\in \Omega $. If at least one of
the functions $u_{1}$, $u_{2}$ is strictly positive, i.e. $\forall T>0$ $%
\exists \beta _{T}>0$ such that $u_{1}\geq \beta _{T}$ or $u_{2}\geq \beta _{T}$ on $Q_{T}$, 
then $u_{1}\leq u_{2}$ on $Q_{\infty }$.
\end{lem}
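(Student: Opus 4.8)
The plan is to test the difference of the two variational inequalities against a suitable nonnegative test function built from the positive part of $u_1-u_2$, and then use a Gronwall argument. First I would fix $T>0$ and, assuming without loss of generality that $u_2\geq\beta_T>0$ on $Q_T$ (the other case is symmetric, exchanging the roles of sub- and super-solution and reversing signs), introduce $w:=u_1-u_2$ and the truncated/regularized positive part $w_\delta:=\min\{(w)^+,\,?\}$ — more precisely I would work with $\phi=(w)^+$ as the test function, after justifying that it lies in $\mathcal{D}$ with $\phi\geq0$: indeed $w\in\mathcal{W}$ gives $\nabla_\perp w\in L^2(Q_T)$ and $|u_i|^{m-1}\nabla_{||}u_i\in L^2(Q_T)$, and on the set $\{w>0\}$ one has $u_1>u_2\geq\beta_T$, so $u_1$ is bounded below there too; hence $\nabla_{||}(w)^+$ and $\nabla_\perp(w)^+$ are controlled and $(w)^+\in L^2(0,T;H^1(\Omega))$. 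Subtracting the inequality for the super-solution from the one for the sub-solution and using $\phi=(w(t,\cdot))^+$ yields, for a.e. $t$,
\begin{equation*}
\int_0^t\langle\partial_s w,(w)^+\rangle\,ds+\int_0^t\!\!\int_\Omega A_{||}\bigl(|u_1|^{m-1}\nabla_{||}u_1-|u_2|^{m-1}\nabla_{||}u_2\bigr)\cdot\nabla_{||}(w)^+\,dx\,ds+\int_0^t\!\!\int_\Omega A_\perp\nabla_\perp w\cdot\nabla_\perp(w)^+\,dx\,ds+\gamma\int_0^t\!\!\int_{\Gamma_\perp}w\,(w)^+\,d\sigma\,ds\leq 0.
\end{equation*}

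The time term gives $\tfrac12\|(w(t))^+\|_{L^2(\Omega)}^2-\tfrac12\|(w(0))^+\|_{L^2(\Omega)}^2$ by the chain rule in $\mathcal{W}$ (justified via the embedding $\mathcal{W}\subset C([0,T];L^2_w(\Omega))$ of Remark \ref{remTh1} and a standard Lions-Magenes regularization lemma); the initial term vanishes since $u_1(0,\cdot)\leq u_2(0,\cdot)$. The boundary term is manifestly nonnegative since $\gamma\geq0$ and $w(w)^+\geq0$. The perpendicular diffusion term equals $\int_0^t\!\!\int_\Omega A_\perp\nabla_\perp(w)^+\cdot\nabla_\perp(w)^+\,dx\,ds\geq A_0\int_0^t\|\nabla_\perp(w)^+\|_{L^2}^2\,ds\geq0$ by coercivity \eqref{eq:J48a3}. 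So everything hinges on the parallel term: I must show that the integrand $A_{||}\bigl(|u_1|^{m-1}\nabla_{||}u_1-|u_2|^{m-1}\nabla_{||}u_2\bigr)\cdot\nabla_{||}(w)^+$ is, up to lower-order terms absorbable by Gronwall, nonnegative on $\{w>0\}$.

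The key algebraic observation is to rewrite the parallel fluxes in terms of the Kirchhoff-type variable: since on $\{u_i>0\}$ one has $|u_i|^{m-1}\nabla_{||}u_i=\tfrac{1}{m}\nabla_{||}(u_i^{m})$, the parallel term becomes $\tfrac{1}{m}\int_0^t\!\!\int_\Omega A_{||}\,\nabla_{||}(u_1^{m}-u_2^{m})\cdot\nabla_{||}(w)^+\,dx\,ds$. Now on $\{w>0\}$ the function $s\mapsto s^m$ is increasing and locally Lipschitz (with both arguments bounded below by $\min\{\beta_T,\text{l.b. of }u_1\}$ and above by their $L^\infty$ bounds), so $\nabla_{||}(u_1^m-u_2^m)$ and $\nabla_{||}(w)^+=\nabla_{||}w\cdot\mathbf 1_{\{w>0\}}$ point in "compatible" directions; concretely I would write $u_1^m-u_2^m=g\,w$ with $g:=m\int_0^1(\theta u_1+(1-\theta)u_2)^{m-1}\,d\theta$ satisfying $0<g_0\leq g\leq g_1$ on $\{w>0\}\cap Q_T$, whence $\nabla_{||}(u_1^m-u_2^m)=g\,\nabla_{||}w+w\,\nabla_{||}g$ there. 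The first piece contributes $\tfrac1m\int A_{||}\,g\,|\nabla_{||}(w)^+|^2\geq0$; the second piece is bounded by $C\int_\Omega |w||\nabla_{||}g||\nabla_{||}(w)^+|$, which I control by Cauchy-Schwarz and Young's inequality, absorbing the $|\nabla_{||}(w)^+|^2$ part into the good term and leaving $C\int_0^t\|(w)^+\|_{L^2}^2\,ds$ — here I need $\nabla_{||}g\in L^2$, which follows because $g$ is a smooth function of $u_1,u_2$ and $A_{||}\in W^{1,\infty}$, using $\nabla_\perp u_i\in L^2$ and $|u_i|^{m-1}\nabla_{||}u_i\in L^2$ together with the lower bound on $u_i$ on the relevant set.

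Collecting everything, I arrive at $\tfrac12\|(w(t))^+\|_{L^2(\Omega)}^2\leq C\int_0^t\|(w(s))^+\|_{L^2(\Omega)}^2\,ds$, and Gronwall's lemma gives $(w(t))^+=0$ for a.e. $t\in(0,T)$, i.e. $u_1\leq u_2$ on $Q_T$; since $T>0$ was arbitrary, $u_1\leq u_2$ on $Q_\infty$. \textbf{The main obstacle} I anticipate is the rigorous justification that $(w)^+$ is an admissible test function and that the chain rule $\int_0^t\langle\partial_s w,(w)^+\rangle\,ds=\tfrac12\|(w(t))^+\|^2-\tfrac12\|(w(0))^+\|^2$ holds in the low-regularity class $\mathcal{W}$ (where $\partial_t w$ only lies in $L^2(0,T;(H^1)^*)$ and $\nabla_{||}w$ is not in $L^2$ a priori, only $|u_i|^{m-1}\nabla_{||}u_i$ is) — this forces the strict-positivity hypothesis into an essential role, since it is exactly what upgrades $\nabla_{||}w$ to $L^2$ on $\{w>0\}$ and makes the Kirchhoff substitution legitimate; technically this is handled by a time-mollification (Steklov average) argument in the spirit of \cite{LM}, approximating $(w)^+$ by truncations $\min\{(w)^+,k\}$ and passing to the limit.
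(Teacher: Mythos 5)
Your overall strategy --- test with $(u_1-u_2)^+$ and close an $L^2$ Gronwall inequality --- is the natural first attempt, but it breaks down precisely at the step you present as routine, namely the absorption of the cross term. After writing $u_1^m-u_2^m=g\,w$ with $g=m\int_0^1(\theta u_1+(1-\theta)u_2)^{m-1}\,d\theta$, your parallel cross term is $\frac1m\int_{\{w>0\}}A_{||}\,w\,\nabla_{||}g\cdot\nabla_{||}(w)^+$. Young's inequality turns this into
$\epsilon\int|\nabla_{||}(w)^+|^2+C_\epsilon\int_{\{w>0\}}w^2|\nabla_{||}g|^2$,
and the second integral is controlled by $C\int|(w)^+|^2$ only if $\nabla_{||}g\in L^\infty$. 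But $\nabla_{||}g$ is built from $\nabla_{||}u_1$ and $\nabla_{||}u_2$, which are merely in $L^2$ on the relevant set (and that only thanks to the lower bound $\beta_T$); no $L^\infty$ gradient bound is available for functions of the class $\mathcal{W}$. Estimating instead by $\|w\|_{L^\infty}^2\int|\nabla_{||}g|^2$ produces a constant, not a Gronwall term, so the loop does not close. This is the classical obstruction to the naive energy method for porous-medium-type equations. A secondary but real problem is admissibility: $\mathcal{W}$ only guarantees $u_i^m\in L^2(0,T;H^1(\Omega))$, not $u_i\in L^2(0,T;H^1(\Omega))$; if, say, $u_2$ is the strictly positive one, $u_1$ may still vanish, $u_1-u_2$ need not be a Sobolev function, and the identity $\nabla(w)^+=\mathbf{1}_{\{w>0\}}\nabla w$ on which you rely is not justified.

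The paper's proof avoids both difficulties with Oleinik's device: it tests with $\phi=H_k(u_1^m-u_2^m)$, where $H_k$ is the piecewise-linear approximation of the Heaviside function with slope $k$ on $(0,\frac1k)$. This $\phi$ lies in $L^2(0,T;H^1(\Omega))$ automatically because $u_i^m\in L^2(0,T;H^1(\Omega))$; its gradient is supported on the thin set $\omega_T^k=\{0<u_1^m-u_2^m<\frac1k\}$; the parallel term becomes the manifestly nonnegative quantity $\frac km\iint_{\omega_T^k}A_{||}|\nabla_{||}(u_1^m-u_2^m)|^2$; and the mismatched cross term (which now appears in the perpendicular part) carries the factor $k(u_1^{m-1}-u_2^{m-1})\le C_m/\beta_T$ on $\omega_T^k$ --- this is exactly where the strict positivity enters --- so that it is dominated by a fixed $L^1$ function integrated over a set whose measure tends to $0$ as $k\to\infty$. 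The conclusion is the $L^1$ statement $\int_\Omega(u_1-u_2)^+(T,x)\,dx\le\int_\Omega(u_1-u_2)^+(0,x)\,dx=0$, obtained from the limit of the time pairing, with no Gronwall argument at all. If you wish to keep your framework, the repair is to replace $(w)^+$ by such a truncated Heaviside of $u_1^m-u_2^m$, so that the bad term is localized to a set of vanishing measure rather than absorbed by Gronwall.
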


\begin{proof}
For any $k>0$, introduce the function $H_{k}:\mathbb{R}\rightarrow 
\mathbb{R}$ as 
\begin{equation*}
H_{k}(u)=\left\{ 
\begin{array}{ll}
0, & \text{if }u\leq 0 \\ 
ku, & \text{if }0<u\leq \frac{1}{k} \\ 
1, & \text{if }u>\frac{1}{k}%
\end{array}%
\right. 
\end{equation*}%
and put $\phi =H_{k}(u_{1}^{m}-u_{2}^{m})$. Note that $\phi \in
L^{2}(0,T;H^{1}(\Omega ))$ since $u_{1},u_{2}\in \mathcal{W}$ and thus $%
\nabla u_{1}^{m},\nabla u_{2}^{m}\in L^{2}(Q_{T})$. Observe also that the
gradient of $\phi $ is zero outside from the set $\omega _{T}^{k}=\{(t,x)\in 
\bar{Q}_{T}:0<u_{1}^{m}-u_{2}^{m}<\frac{1}{k}\}$. Choosing this $\phi $ as
the test function in the inequalities (\ref{weak}) for $u_{1}$ and $u_{2}$
and subtracting the second one form the first one gives%
\begin{align}
& \int_{0}^{T}\langle \partial
_{t}(u_{1}-u_{2}),H_{k}(u_{1}^{m}-u_{2}^{m})\rangle _{(H^{1})^{\ast
},H^{1}}\,dt\leq -\frac{k}{m}\iint\limits_{\omega _{T}^{k}}A_{||}|\nabla
_{||}(u_{1}^{m}-u_{2}^{m})|^{2}dxdt  \label{inqcmp} \\
& {\quad \quad }-mk\iint\limits_{\omega _{T}^{k}}A_{\perp }\nabla _{\perp
}(u_{1}-u_{2})\cdot \lbrack u_{1}^{m-1}\nabla _{\perp
}(u_{1}-u_{2})+(u_{1}^{m-1}-u_{2}^{m-1})\nabla _{\perp }u_{2}]dxdt  \notag \\
& {\quad \quad }-\gamma \int_{0}^{T}\int_{\Gamma _{\perp
}}(u_{1}-u_{2})H_{k}(u_{1}^{m}-u_{2}^{m})\,d\sigma dt  \notag \\
& \qquad\qquad
\leq mk \iint\limits_{\omega _{T}^{k}}A_{\perp
}|\nabla _{\perp }(u_{1}-u_{2})\cdot \nabla _{\perp }u_{2}|
(u_1^{m-1}-u_2^{m-1})dxdt
\notag
\end{align}
since $(u_{1}-u_{2})$ and $H_{k}(u_{1}^{m}-u_{2}^{m})$ are of the same sign.
We have moreover 
$$
u_{1}^{m-1}-u_{2}^{m-1}
\leq C_{m}\frac{u_{1}^{m}-u_{2}^{m}}{u_{1}+u_{2}}
\leq \frac {C_{m}}{k\beta}
$$ 
on $\omega _{T}^{k}$ with a constant $C_{m}>0$ depending only on $m$. 
Indeed, the first inequlity here holds for any $u_{1} \ge u_{2}>0$  and the second inequality follows by noting that 
$u_{1}^{m}-u_{2}^{m}<1/k$ on $\omega _{T}^{k }$ and $u_{1}+u_{2}\geq
\beta _{T}$ on $Q_{T}$. We see that (\ref{inqcmp}) combined with the inequality above implies
$$
\int_{0}^{T}\langle \partial
_{t}(u_{1}-u_{2}),H_{k}(u_{1}^{m}-u_{2}^{m})\rangle _{(H^{1})^{\ast
},H^{1}}\,dt
\leq 
\frac{mC_{m}}{\beta _{T}}\iint\limits_{\omega _{T}^{k}}A_{\perp
}|\nabla _{\perp }(u_{1}-u_{2})\cdot \nabla _{\perp }u_{2}|dxdt .
$$
Let us now take the limit $k\rightarrow \infty $ in this inequality. We
have $meas(\omega _{T}^{k})\rightarrow 0$ so that
\begin{equation}\label{limk1}
\limsup_{k\to\infty}\int_{0}^{T}\langle \partial_{t}(u_{1}-u_{2}),H_{k}(u_{1}^{m}-u_{2}^{m})\rangle _{(H^{1})^{\ast
},H^{1}}\,dt
\leq 0.
\end{equation}
On the other hand,
\begin{equation}\label{limk2}
\lim_{k\to\infty}\int_{0}^{T}\langle \partial_{t}(u_{1}-u_{2}),H_{k}(u_{1}^{m}-u_{2}^{m})\rangle _{(H^{1})^{\ast
},H^{1}}\,dt
=
\int_{\Omega }(u_{1}-u_{2})^{+}(T,x)dx-\int_{\Omega
}(u_{1}-u_{2})^{+}(0,x)dx. 
\end{equation}
where $(u)^{+}=(u+|u|)/2$ denotes the positive part of $u$.
Indeed, $H_{k}(u_{1}^{m}-u_{2}^{m})\rightarrow H(u_{1}-u_{2})$ a.a. on $Q_{T}$ where $%
H$ denotes the Heaviside function ($H(x)=1$ for $x>0$ and $H(x)=0$ for $%
x\leq 0$). Observing that $\partial _{t}(u_{1}-u_{2})^{+}=\partial
_{t}(u_{1}-u_{2})H(u_{1}-u_{2})$ in the sense of distributions, proves (\ref{limk2}) for sufficiently smooth $u_1,u_2$.  
A standard density argument shows then that (\ref{limk2}) actually holds for any $u_1,u_2\in\mathcal{W}$. Note, in particular, that the terms at the right-hand side of (\ref{limk2}) are well defined for functions in $\mathcal{W}$ thanks to the inclusion
$\mathcal{W} \subset C([0,T];L^{2}_w(\Omega))$, cf. Remark \ref{remTh1}. 
Comparing (\ref{limk1}) and (\ref{limk2}) and taking into account $(u_{1}-u_{2})^{+}=0$ on $\Omega $ at $t=0$, yields
\begin{equation}
\int_{\Omega }(u_{1}-u_{2})^{+}(T,x)dx\leq 0,  \label{concmp}
\end{equation}%
which implies $u_{1}\leq u_{2}$ on $Q_{\infty}$.
\end{proof}

The construction of the following remarkable sub-solution is essentially due to M. Pierre \cite{MPierre}. 

\begin{lem} {\bf (Construction of a weak solution)}\label{subsol}
Assume Hypothesis \ref{hypo} and $m\geq 1$. For any $\beta >0$, there exists a weak sub-solution $w$ to problem (\ref{Pm}) satisfying 
$c \le w(0,x)\leq \beta $ for $x\in \Omega $ and $w(t,x)\geq ce^{-Kt}$ for $%
(t,x)\in Q_{\infty }$, with some constants $c,K>0$ which depend only on $%
\beta $.
\end{lem}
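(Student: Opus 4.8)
The plan is to exploit the porous-medium structure of (\ref{Pm}) along the field lines and to write $w$ down more or less explicitly; two cases are however degenerate and are disposed of first. If $\gamma=0$, $w\equiv\beta$ is a stationary solution of (\ref{Pm}). If $m=1$, (\ref{Pm}) is linear and one takes $w(t,x):=\tfrac{\beta}{2}e^{-Kt}\psi_1(x)$, where $(K,\psi_1)$ is the first eigenpair of $-\nabla_{||}\!\cdot(A_{||}\nabla_{||}\cdot)-\nabla_\perp\!\cdot(A_\perp\nabla_\perp\cdot)$ with the Robin condition of (\ref{Pm}) on $\Gamma_\perp$, normalised by $\max_{\bar\Omega}\psi_1=1$: by Krein--Rutman $\psi_1>0$, by Hopf's lemma and smoothness $\psi_1$ is bounded below on $\bar\Omega$, and $K>0$ since $\gamma>0$. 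So assume henceforth $\gamma>0$ and $m>1$. Passing to the ``pressure'' $P:=w^{m-1}$ one has $A_{||}w^{m-1}\nabla_{||}w=\tfrac1{m-1}A_{||}P^{1/(m-1)}\nabla_{||}P$, so that, dividing by $P^{1/(m-1)}>0$, the Robin condition on $\Gamma_\perp$ turns into an \emph{affine} constraint on $\nabla P$ alone; this is the point that makes an explicit sub-solution possible. Observe, though, that several naive candidates are ruled out: testing (\ref{weak}) with $\phi\equiv1$ gives $\frac d{dt}\int_\Omega w\le-\gamma\int_{\Gamma_\perp}w$, so $w$ must decay; a spatially constant decaying $w$ cannot meet the Robin inequality on $\Gamma_\perp$; a separated $w=\theta(t)\psi(x)$ cannot either, because the parallel boundary flux then scales like $\theta^m$ and so cannot dominate $\gamma w\sim\gamma\theta$ as $\theta\to0$; and, by the maximum principle for the — uniformly elliptic, where $w$ is bounded away from $0$ — operator, there is no positive stationary sub-solution. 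Hence $w$ must develop at $\Gamma_\perp$ a thin layer whose slope blows up as $t\to\infty$ (mirroring the degeneracy $A_{||}w^{m-1}\to0$ there) while staying $\ge ce^{-Kt}>0$.

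I would build such a layer as follows. Near $\Gamma_{out}$, using Hypothesis \ref{hypo} and the smoothness of $\Gamma_{out}$, introduce a flow-time coordinate $s\ge0$ with $\{s=0\}=\Gamma_{out}$, and on a thin strip $\{0\le s\le s_0\}$ set $w^{\,m-1}:=p_0e^{-\mu t}+q_*\,s$. Then $w>0$ and $w(t,\cdot)\ge(p_0e^{-\mu t})^{1/(m-1)}$; the Robin inequality on $\Gamma_{out}$ collapses — in case A, where $n_\perp=0$ there — to $q_*\ge(m-1)\gamma/A_0$ (in case B, $|n_\perp|$ being bounded below by the transversality $|n\cdot b|<\varkappa<1$, one lets $w$ depend in addition on the tangential variables in the strip so as to absorb the perpendicular boundary flux); and multiplying the interior inequality by $(m-1)w^{\,m-2}>0$ turns it into $\partial_tP\le[\cdots]$, whose left side is $-\mu p_0e^{-\mu t}\le0$ while its right side has leading term $\tfrac{A_{||}}{m-1}q_*^2(b\cdot\nabla s)^2\ge\tfrac{A_0}{m-1}q_*^2$, dominating all the $O(q_*)$ corrections — the first-order terms coming from $\mathrm{div}\,b\neq0$ in the curved coordinates, the $\nabla A_{||}$ term, and the perpendicular term (uniformly bounded near $\Gamma_\perp$ because $|\nabla_\perp s|=O(s)$ there) — as soon as $q_*$ is chosen large, depending only on $\beta$ through $\|A_{||}\|_{W^{1,\infty}},\|A_\perp\|_{W^{1,\infty}}$, the geometry of $b$ and $\beta^{m-1}$. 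A mirror strip is constructed at $\Gamma_{in}$. The remaining — and most delicate — step is to match these two strips across the interior of $\Omega$ to a slowly decaying profile in such a way that \emph{all} terms of the weak inequality (\ref{weak}), interface terms included, have the right sign; this is the heart of M. Pierre's construction. Once it is carried through, taking $p_0:=(\beta/2)^{m-1}$ and $K:=\mu/(m-1)$ one reads off $w\in\mathcal W$, $\tfrac\beta2\le w(0,\cdot)\le\beta$, $w\ge\tfrac\beta2e^{-Kt}$, so that $c:=\beta/2$ and $K$ depend only on $\beta$ (and the fixed data).

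I expect the main obstacle to be precisely this behaviour at $\Gamma_\perp$: reconciling the degeneracy of the parallel diffusion — which, as just seen, excludes constant, separated and stationary sub-solutions — with the strict positivity $w\ge ce^{-Kt}$, and engineering the way the boundary layer is glued to the interior so that (\ref{weak}) holds globally, all constants being controlled by $\beta$ alone. Everything else (membership of $w$ in $\mathcal W$, the signs of the interface terms, the case-B boundary contribution, the geometric corrections in the curved flow coordinates) is routine bookkeeping.
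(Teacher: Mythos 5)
Your analysis correctly isolates the key mechanism: on a boundary component where $n_\perp=0$ the Robin term $\gamma w$ can only be beaten by the degenerate parallel flux, and since that flux scales like $w\,\nabla_{||}(w^{m-1})$, what must be bounded below at $\Gamma_\perp$ is the gradient of the ``pressure'' $w^{m-1}$, which in turn forces the boundary trace of $w$ to decay faster than its interior values. This is exactly the mechanism in the paper (there, $\eta(t)=K_1\delta^{m(m-1)}$ makes $w|_{\Gamma_\perp}\sim\delta^m$ while $w\sim\delta$ inside). But your proof is not complete: the step you yourself call ``the heart of M.~Pierre's construction'' --- gluing the two boundary strips $w^{m-1}=p_0e^{-\mu t}+q_*s$ across the interior so that the inequality holds globally --- is never carried out, and it is precisely where the difficulty sits. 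Any profile that increases away from both $\Gamma_{in}$ and $\Gamma_{out}$ must be concave somewhere, and there the term $-\frac1m\nabla_{||}\cdot(A_{||}\nabla_{||}w^m)$ changes sign and must be absorbed by $\partial_t w$; your strip ansatz, whose interior inequality is saved by the always-positive $|\nabla_{||}P|^2$ term, gives no hint of how this is done at the matching. The paper avoids the gluing altogether with the single global ansatz $w=\delta(t)\bigl(\sin(\pi\xi_d)+\eta(t)\bigr)^{1/m}$ in flow coordinates $\xi_d\in(0,1)$: the sine simultaneously provides the inward pressure slope at both ends and a concavity term $\frac{\delta^m}{m}A_{||}\chi^2\pi^2\sin(\pi\xi_d)$ that is uniformly dominated by $\dot\delta$ once $\delta(t)=\delta_0e^{-(K_2+K_3)t}$; this is what actually produces the constants $c,K$ of the statement. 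Your strip construction also leaves the condition $n_\perp\cdot\nabla_\perp w\le0$ on $\Gamma_{||}$ unaddressed (your $s$ has $\nabla_\perp s\ne0$ away from $\Gamma_\perp$, and an $O(s)$ bound does not fix a sign), whereas the paper's coordinates are chosen so that $\nabla_\perp\xi_d\equiv0$.

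A secondary point: your blanket claim that no separated profile $\theta(t)\psi(x)$ can satisfy the Robin inequality is wrong in case B of Hypothesis \ref{hypo}. There $|n\cdot b|<\varkappa<1$, so $|n_\perp|$ is bounded below, and the \emph{non-degenerate} perpendicular flux $A_\perp n_\perp\cdot\nabla_\perp w$ is linear in $w$; the paper's case-B sub-solution is exactly the separated $w=ce^{-Kt}\phi(x)$ with $\nabla\phi=-\zeta\phi n$ on $\Gamma_\perp$ and $\zeta$ large. Your argument about the $\theta^m$ scaling of the parallel flux only rules out separated profiles when the perpendicular flux is unavailable, i.e.\ in case A. So in case B your elaborate tangential modification of the strip is unnecessary, and in case A the decisive global construction is missing.
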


\begin{proof}
We will construct a smooth sub-solution $w$ satisfying all the announced
properties. We thus rewrite the definition of a sub-solution in the strong
form supposing from the beginning that $w\geq 0$:%
\begin{eqnarray}
\partial _{t}w-\frac{1}{m}\nabla _{||}\cdot (A_{||}\nabla _{||}w^{m})-\nabla
_{\perp }\cdot (A_{\perp }\nabla _{\perp }w) &\leq &0,\quad \text{ on }\quad (0,\infty)
\times \Omega \,  \label{eqw} \\
{1 \over m}A_{||}n_{||}\cdot \nabla _{||}w^{m}+A_{\perp }n_{\perp }\cdot \nabla _{\perp
}w+\gamma w &\leq &0,\quad \text{ on }\quad (0,\infty)\times \Gamma
_{\perp },  \label{eqbw1}\\
n_{\perp }\cdot \nabla _{\perp }w&\leq& 0,\quad \text{ on }%
\quad (0,\infty)\times \Gamma _{||}\,  \label{eqbw2}
\end{eqnarray}
The construction of such a function $w$ will be performed separately for the two cases mentioned in Hypothesis \ref{hypo}.

\noindent\textbf{Case A:}
One can construct in this case a new coordinate system $\xi _{1},\ldots ,\xi _{d}$ on $\Omega$ such that the coordinate lines $\xi_d$ coincide with the $b$-field lines and the surfaces $\xi_d=const$ are perpendicular to these lines. Domain $\Omega$ is represented in these coordinates by a cylinder $\Omega_\xi=D\times(0,1)$ with $\xi'=(\xi _{1},\ldots ,\xi _{d-1})\in D$ and $\xi_d\in(0,1)$. We thus have $\nabla_{||}=b\chi\frac{\partial }{\partial \xi _{d}}$ with some scalar strictly positive field $\chi$. We assume that the component $\Gamma_{in}$ of the boundary is represented  by $D\times\{\xi_d=0\}$, $\Gamma_{out}$ is represented  by $D\times\{\xi_d=1\}$ and $\Gamma_{||}$ is represented  by $\partial D\times(0,1)$. 

We are searching now for a sub-solution under the form $w(t,x)=\delta(t)(\sin (\pi
\xi _{d})+\eta(t))^{1/m}$ where $\delta(t)$ and $\eta(t)$ are two positive decreasing
functions which are yet to be adjusted. We observe immediately that $\nabla
_{\perp }w=0$ on $\Omega $ for all time so that (\ref{eqbw2}) is
automatically satisfied. The remaining boundary conditions (\ref{eqbw1}) \
should be checked on $\Gamma _{in}$ and $\Gamma _{out}$. We remind that $%
n=n_{||}=b$ on $\Gamma _{out}$ ($\xi_{d}=1$). Similarly, $n=n_{||}=-b$ on $\Gamma
_{in} $ ($\xi _{d}=0$). Substituting the Ansatz for $w$ into (\ref{eqbw1}) now gives%
\begin{equation*}
-{\frac{1}{m}}A_{||}\chi \delta^{m}\pi +\gamma a\eta^{\frac{1}{m}}\leq 0,\text{ for }%
\xi _{d}=0\text{ and }\xi _{d}=1.
\end{equation*}%
This holds if one takes $\eta=K_{1}\delta^{m(m-1)}$ where $K_{1}=\left( \min_{\xi
\in \bar{\Omega}_{\xi }}{\frac{\pi }{m\gamma }}A_{||}\chi \right) ^m >0$.

It remains to check (\ref{eqw}). Substituting the Ansatz for $w$, this
inequality is reduced to
\begin{equation}\label{adoti}
\dot{\delta}(\sin (\pi \xi _{d})+\eta)^{\frac{1}{m}}-\frac{\delta^{m}}{m}\chi 
\frac{\partial }{\partial \xi _{d}}\left( A_{||}\chi \right) \pi \cos (\pi \xi
_{d})+\frac{\delta^{m}}{m}A_{||}\chi ^{2}\pi ^{2}\sin (\pi \xi _{d})\leq 0.
\end{equation}%
Note that we have denoted the time derivative here by a dot and we neglected
a term with $\dot{\eta}$ since it is negative (the function $\eta(t)$ is
decreasing). We divide now both sides by $(\sin (\pi \xi _{d})+\eta)^{\frac{1}{m}}$ and bound each term on the left-hand side as
\begin{equation*}
-\frac{\delta^{m} \pi \cos (\pi \xi _{d}) \chi}{m(\sin (\pi \xi _{d})+\eta)^{\frac{1}{m}}}
\frac{\partial (A_{||}\chi) }{\partial \xi _{d}}
\leq 
\frac{\pi\delta^{m}\chi}{m\eta^{\frac{1}{m}}}
\left|\frac{\partial (A_{||}\chi) }{\partial \xi _{d}} \right|
= \frac{\pi\delta^{m}\chi}{mK_{1}^{\frac{1}{m}}\delta^{m-1}}
\left|\frac{\partial (A_{||}\chi) }{\partial \xi _{d}} \right|
= \delta \frac{\pi}{mK_{1}^{\frac{1}{m}}} \chi
\left|\frac{\partial (A_{||}\chi) }{\partial \xi _{d}} \right|
\end{equation*}%
and%
\begin{equation*}
\frac{\delta^{m}}{m}A_{||}\chi ^{2}\pi ^{2} \frac{\sin (\pi \xi _{d})}{(\sin (\pi \xi _{d})+\eta)^{\frac{1}{m}}}
\leq
\frac{\delta^{m}}{m}A_{||}\chi ^{2}\pi ^{2}  (\sin(\pi \xi _{d}))^{1-\frac{1}{m}}
\leq \frac{\delta^{m}}{m}A_{||}\chi ^{2}\pi ^{2}. 
\end{equation*}%
We see now that inequality (\ref{adoti}) will be satisfied if we require%
\begin{equation}\label{eqdifa}
\dot{\delta}+K_{2}\delta+K_{3}\delta^{m}\leq 0
\end{equation}%
with%
\begin{equation*}
K_{2}=\frac{\pi }{mK_{1}^{\frac{1}{m}}}\max_{\xi \in \Omega _{\xi }}\left\vert \chi \frac{%
\partial }{\partial \xi _{d}}\left( A_{||}\chi \right) \right\vert \text{
and }K_{3}=\frac{\pi ^{2}}{m}\max_{\xi \in \Omega _{\xi }}\left\vert
A_{||}\chi ^{2}\right\vert .
\end{equation*}%
One can thus take $\delta(t)=\delta_{0}e^{-(K_{2}+K_{3})t}$ with any $\delta_{0}\in
(0,1] $.

In summary, $w(t,x)=\delta_{0}e^{-(K_{2}+K_{3})t}(\sin(\pi\xi_d)
+K_1a_{0}^{m(m-1)}e^{-m(m-1)(K_{2}+K_{3})t})^{\frac{1}{m}}$ is a
sub-solution. Clearly, for any $\beta >0$ one can take $\delta_{0}$ small enough
so that $w(0,x) \leq \beta $.
Moreover, for any $t$, $w(t,x)\geq \delta_{0}^{m} K_1^{1/m} e^{-m(K_{2}+K_{3})t}$ so that we
have proved the statement of the Lemma putting $c=\delta_{0}^{m} K_1^{1/m}$, $K=m(K_{2}+K_{3})$.

\noindent\textbf{Case B:}
Let $\phi \in C^{2}(\bar{\Omega})$ be a strictly positive function such that $\phi (x)\geq 1$ on $\bar\Omega $, $n_{\perp }\cdot \nabla _{\perp }\phi =n\cdot \nabla \phi
\leq 0$ on $\Gamma _{||}$ and $\nabla\phi=-\zeta\phi n$ on $\Gamma _{\perp }$ with some sufficiently big constant $\zeta>0$, to be prescribed later. 

We are searching now for a sub-solution under the form $w(t,x)=ce^{-Kt}\phi (x)$ where $c,K$ are some positive constants which are
yet to be adjusted. We observe immediately that (\ref{eqbw2}) is
automatically satisfied for such $w$. The left-hand side of (\ref{eqbw1})
can be written as%
\begin{eqnarray*}
&&
{\frac{c^{m}}{m}}e^{-mKt}A_{||}n_{||}\cdot \nabla _{||}\phi
^{m}+ce^{-Kt}(A_{\perp }n_{\perp }\cdot \nabla _{\perp }\phi +\gamma \phi
) 
\\
&&\qquad
\leq ce^{-Kt}\left(
-\zeta c^{m-1}e^{-(m-1)Kt} A_{||}|n_{||}|^2\phi ^{m}
-\zeta A_{\perp}n_{\perp}\cdot n_{\perp} \phi
+\gamma \phi 
\right)
\\
&&\qquad
\leq ce^{-Kt} (-\zeta A_{\perp}n_{\perp}\cdot n_{\perp}  +\gamma \phi )
\end{eqnarray*}%
and thus it is negative provided $\zeta$ is chosen sufficiently big. Indeed, $A_{\perp }n_{\perp }\cdot n_{\perp }$ 
is uniformly bounded from below by a positive constant in view of the geometrical hypothesis of case B. 

It remains to check (\ref{eqw}). Substituting the Ansatz for $w$
into this inequality yields
\begin{equation}
ce^{-Kt}\left( -K\phi -{\frac{c^{m-1}e^{-(m-1)Kt}}{m}}\nabla _{||}\cdot
(A_{||}\nabla _{||}\phi ^{m})-\nabla _{\perp }\cdot (A_{\perp }\nabla
_{\perp }\phi )\right) \leq 0.  
\end{equation}%
This inequality is satisfied provided we take $c\leq 1$ and 
\begin{equation*}
K= 
\frac{1}{m} \max_{x\in \Omega }\left\vert \nabla _{||}\cdot (A_{||}\nabla
_{||}\phi ^{m})
)\right\vert 
+\max_{x\in \Omega }\left\vert
\nabla _{\perp }\cdot (A_{\perp }\nabla _{\perp }\phi)
\right\vert
.
\end{equation*}%
Finally, for any $\beta >0$ one can take $c$ small enough so that $%
w(0,x)=c\phi (x)\leq \beta $. Lemma is thus proved also in case B.

\end{proof}

\begin{remark}
In the case of a simple ``aligned" geometry, {\it i.e.} $b=e_{d}$ and $%
\Omega =D\times ]0,L[$ with $D$ a domain in $\mathbb{R}^{d-1}$, and supposing $A_{||}=const$, one can easily construct a sub-solution satisfying a sharper estimate: 
under the assumptions of the preceding Lemma, there is a sub-solution such that 
$$
w(t,x)\geq \frac{C}{(1+Kt)^{\frac{m}{m-1}}}. 
$$
Indeed, one can repeat the proof as in case A of the preceding Lemma, taking $\xi'=(x_1,\ldots,x_{d-1})$, $\xi_d=x_d/L$, up to the differential inequality (\ref{eqdifa}). 
One observes now that $K_2=0$ so that one can take $\delta(t)=\frac{\delta_0 }{(1+Kt)^{\frac{1}{m-1}}}$ with any $\delta_0 >0$ and $K=(m-1)K_{3}\delta_0 ^{m-1}$.
Our sub-solution is thus $w=\frac{\delta_0 }{(1+Kt)^\frac{1}{m-1}}\left( \sin(\pi x_d) +
\frac{K_1 \delta_0 ^{m(m-1)}}{(1+Kt)^m}\right) ^{\frac{1}{m}}$  and $w\geq \frac{K_1^{\frac 1m} \delta_0 ^{m}}
{(1+Kt)^{\frac m{m-1}} }$ as stated.
\end{remark}

Let us now turn to the proof of our main result.
\begin{proof}[{\bf Proof of Theorem \ref{mainTh}}]
We shall first regularize the problem, in order to avoid the degeneracy. Then, in a second
step, we shall treat the non-linearity via a fixed point argument. Finally, a
priori estimates shall help us to pass to the limit in the regularized
problem, to prove existence. The comparison principle above will be
used to establish the uniqueness and the positivity of the
solution. Let us now detail these steps. 

\underline{1st step: Regularization}\\
Fix $0<\alpha <1$ and assume that $M>0$ is an upper bound for $u^{0}$.
Introduce for notational simplicity the following functions $a_{\alpha },\Lambda _{\alpha }:\mathbb{R}%
\rightarrow \mathbb{R}^{+}$ %
\begin{equation*}
a_{\alpha }(u):=[\alpha +\min (|u|,M)]^{m-1}\,,\quad \Lambda _{\alpha
}(u):=\int_{0}^{u}a_{\alpha }(s)\,ds\,,
\end{equation*}%
and consider the regularized version of (\ref{weak}): find $u_{\alpha }\in
W_{2}^{1}(0,T;H^{1}(\Omega ),L^{2}(\Omega ))$ (i.e. $u_{\alpha }\in
L^{2}(0,T;H^{1}(\Omega ))$ and $\partial _{t}u_{\alpha }\in
L^{2}(0,T;(H^{1}(\Omega ))^{\ast }))$ such that $u_{\alpha }(0,\cdot )=u^{0}$
and 
\begin{equation}
\begin{array}{lll}
\label{var} &  & \ds\int_{0}^{T}\langle \partial _{t}u_{\alpha }(t,\cdot
),\phi (t,\cdot )\rangle _{(H^{1})^{\ast
},H^{1}}\,dt+\int_{0}^{T}\int_{\Omega }A_{||}a_{\alpha }(u_{\alpha })\nabla
_{||}u_{\alpha }\cdot \nabla _{||}\phi\, dxdt \\[3mm]
&  & \hspace{1cm} \ds+\int_{0}^{T}\int_{\Omega }A_{\perp }\nabla _{\perp }u_{\alpha
}\cdot \nabla _{\perp }\phi\, dxdt+\gamma \int_{0}^{T}\int_{\Gamma _{\perp
}}u_{\alpha }\phi \,d\sigma \,dt=0,\quad \forall \phi \in \mathcal{D}.%
\end{array}%
\end{equation}%
By standard arguments, this problem is well posed. Indeed, consider the
mapping 
\begin{equation*}
\mathcal{T}:B_{R}(0)\rightarrow B_{R}(0),\quad B_{R}(0):=\{v\in
L^{2}(Q_{T})\,\,/\,\,||v||_{L^{2}(Q_{T})}\leq R\}\,,
\end{equation*}%
where we associate to any $v\in B_{R}(0)$ the unique solution $u\in
W_{2}^{1}(0,T;H^{1}(\Omega ),L^{2}(\Omega ))$ of the linearized, regular
parabolic problem 
\begin{eqnarray}
&&\int_{0}^{T}\langle \partial _{t}u(t,\cdot ),\phi (t,\cdot )\rangle
_{(H^{1})^{\ast },H^{1}}\,dt+\int_{0}^{T}\int_{\Omega }A_{||}a_{\alpha
}(v)\nabla _{||}u\cdot \nabla _{||}\phi\, dxdt  \notag \\
&& \hspace{1cm} +\int_{0}^{T}\int_{\Omega }A_{\perp }\nabla _{\perp }u\cdot \nabla _{\perp
}\phi\, dxdt+\gamma \int_{0}^{T}\int_{\Gamma _{\perp }}u\phi \,d\sigma
\,dt=0,\quad \forall \phi \in \mathcal{D}.  \notag
\end{eqnarray}%
Indeed, taking $R:=\sqrt{T}||u^{0}||_{2}$, the mapping $\mathcal{T}$ is
well-defined, continuous and $\mathcal{T}(B_{R}(0))$ is relatively compact
in $L^{2}(Q_{T})$. The continuity follows from the fact that for $%
v_{n}\rightarrow v$ in $L^{2}(Q_{T})$ and $v_{n}\rightharpoonup w$ in $%
W_{2}^{1}(0,T;H^{1}(\Omega ),L^{2}(\Omega ))$, the Lebesgue dominated
convergence theorem permits us to pass to the limit in the linearized term
of the variational formulation. By Schauder fixed point theorem, $\mathcal{T}
$ has a fixed point $\mathcal{T}(u)=u$, which provides a solution to (\ref%
{var}).

The solution $u_{\alpha }$ of problem (\ref{var}) satisfies $0\leq u_{\alpha}\leq
M$, provided we have $0\leq u^{0}\leq M$. Indeed, define $u_{\alpha }^{-}:=\min
(0,u_{\alpha })\leq 0$. Then one gets for the initial condition $u_{\alpha
}^{-}(0,\cdot )\equiv 0$. Taking $u_{\alpha }^{-}$ as the test function in
the variational formulation (\ref{var}), yields immediately 
\begin{equation*}
\begin{array}{lll}
\displaystyle{\frac{1}{2}}\int_{\Omega }|u_{\alpha }^{-}(T,x)|^{2}dx & + & %
\displaystyle\int_{0}^{T}\int_{\Omega }A_{||}a_{\alpha }(u_{\alpha
}^{-})|\nabla _{||}u_{\alpha }^{-}|^{2}dx\,dt \\[3mm] 
& + & \displaystyle\int_{0}^{T}\int_{\Omega }A_{\perp }|\nabla _{\perp
}u_{\alpha }^{-}|^{2}dxdt+\gamma \int_{0}^{T}\int_{\Gamma _{\perp
}}|u_{\alpha }^{-}|^{2}\,d\sigma \,d\tau =0,%
\end{array}%
\end{equation*}%
which shows that $u_{\alpha }^{-}(T,\cdot )\equiv 0$. Since the same
argument can be applied to any final time $T$, we have $u_{\alpha }\geq 0$
in $Q_{\infty }$.

To prove the estimate from above, define $u_{\alpha }^{+}:=\max (0,u_{\alpha
}-M)$. Observe that $u_{\alpha }^{+}(0,\cdot )\equiv 0$ and take $u_{\alpha }^{+}$
as the test function in the variational formulation (\ref{var}): 
\begin{equation*}
\begin{array}{lll}
\displaystyle{\frac{1}{2}}\int_{\Omega }|u_{\alpha }^{+}(T,x)|^{2}dx & + & %
\displaystyle\int_{0}^{T}\int_{\Omega }A_{||}a_{\alpha }(u_{\alpha })|\nabla
_{||}u_{\alpha }^{+}|^{2}\, dx\,dt \\[3mm] 
& + & \displaystyle\int_{0}^{T}\int_{\Omega }A_{\perp }|\nabla _{\perp
}u_{\alpha }^{+}|^{2}\,dxdt+\gamma \int_{0}^{T}\int_{\Gamma _{\perp
}}u_{\alpha }u_{\alpha }^{+}\,d\sigma \,d\tau =0,%
\end{array}%
\end{equation*}%
which shows that $u_{\alpha }^{+}(T,\cdot )\equiv 0$. Since again the same
argument can be applied to any final time, we have $u_{\alpha }\leq M$ in $%
Q_{\infty }.$

\underline{2nd step: A priori estimates}\\
In order to pass to the limit $\alpha \rightarrow 0$, we will need some a
priori estimates for the solution $u_{\alpha }$, independent of $\alpha $.
Taking $\phi =u_{\alpha }$ in the variational formulation (\ref{var}) yields 
\begin{equation*}
\begin{array}{l}
\displaystyle{\frac{1}{2}}\int_{\Omega }|u_{\alpha
}(T,x)|^{2}dx+\int_{0}^{T}\int_{\Omega }A_{||}a_{\alpha }(u_{\alpha
})|\nabla _{||}u_{\alpha }|^{2}\,dxdt \\[3mm] 
\displaystyle+\int_{0}^{T}\int_{\Omega }A_{\perp }|\nabla _{\perp }u_{\alpha
}|^{2}\,dxdt+\gamma \int_{0}^{T}\int_{\Gamma _{\perp }}|u_{\alpha
}|^{2}\,d\sigma \,dt={\frac{1}{2}}\int_{\Omega }|u^{0}(x)|^{2}dx,%
\end{array}%
\end{equation*}%
which implies 
\begin{equation}\label{alpest}
\begin{array}{ll}
||u_{\alpha }||_{L^{\infty }(0,T;L^{2}(\Omega ))}\leq
||u^{0}||_{L^{2}(\Omega )}, 
& \quad
\int_{0}^{T}\int_{\Omega }a_{\alpha
}(u_{\alpha })|\nabla _{||}u_{\alpha }|^{2}dx\,dt\leq
C||u^{0}||_{L^{2}(\Omega )}^{2}\,,
\\
||\nabla _{\perp }u_{\alpha }||_{L^{2}(Q_{T})}\leq C||u^{0}||_{L^{2}(\Omega
)}, 
& \quad
||u_{\alpha }||_{L^{2}([0,T]\times \Gamma _{\perp })}\leq
C||u^{0}||_{L^{2}(\Omega )},
\end{array}
\end{equation}
with some constant $C>0$.

Taking now $\phi =\Lambda _{\alpha }(u_{\alpha })$ in (\ref{var}), which is
permitted since $u_{\alpha }\in L^{\infty }(Q_{T}) \cap L^2(0,T;H^1(\Omega))$, yields 
\begin{equation*}
\begin{array}{l}
\displaystyle\int_{0}^{T}\langle \partial _{t}u_{\alpha },\Lambda _{\alpha
}(u_{\alpha })\rangle _{(H^{1})^{\ast },H^{1}}\,dt+\int_{0}^{T}\int_{\Omega
}A_{||}|\nabla _{||}\left( \Lambda _{\alpha }(u_{\alpha })\right) |^{2}dxdt \\%
[3mm] 
\displaystyle+\int_{0}^{T}\int_{\Omega }A_{\perp }a_{\alpha }(u_{\alpha
})|\nabla _{\perp }u_{\alpha }|^{2}dxdt+\gamma \int_{0}^{T}\int_{\Gamma
_{\perp }}u_{\alpha }\Lambda _{\alpha }(u_{\alpha })d\sigma \,d\tau =0\,.%
\end{array}%
\end{equation*}%
The first term can be rewritten as 
\begin{equation*}
\int_{0}^{T}\langle \partial _{t}u_{\alpha },\Lambda _{\alpha }(u_{\alpha
})\rangle _{(H^{1})^{\ast },H^{1}}\,dt=\int_{\Omega }\Psi _{\alpha
}(u_{\alpha }(T,x))\,dx-\int_{\Omega }\Psi _{\alpha }(u^{0}(x))dx\,,
\end{equation*}%
with $\Psi _{\alpha }(u):=\int_{0}^{u}\Lambda _{\alpha }(s)\,ds$. Due to the
facts that $0\leq u_{\alpha }\leq M$, $\Lambda _{\alpha }(u_{\alpha })\geq 0$
and $\Psi _{\alpha }(u_{\alpha })\geq 0$, we get 
\begin{eqnarray*}
\int_{0}^{T}\int_{\Omega }A_{||}|\nabla _{||}\left( \Lambda _{\alpha }(u_{\alpha
})\right) |^{2}dxdt &\leq & C\int_{\Omega }\Psi _{\alpha }(u^{0}(x))\,dx\leq 
\frac{C}{m(m+1)}\int_{\Omega }(\alpha +u^{0}(x))^{m+1}\,dx \\
&\leq &C\left( \alpha ^{m+1}+M^{m+1}\right) \,.
\end{eqnarray*}%
Thus, we have that the family $\{\nabla _{||}\Lambda _{\alpha }(u_{\alpha
})\}_{\alpha }$, $\alpha \in ]0,1]$ is bounded in $L^{2}(Q_{T})$. Moreover, $%
\{\nabla _{\perp }\Lambda _{\alpha }(u_{\alpha })\}_{\alpha }$ is also
bounded in $L^{2}(Q_{T})$, since $\nabla _{\perp }(\Lambda _{\alpha
}(u_{\alpha }))=a_{\alpha }(u_{\alpha })\nabla _{\perp }u_{\alpha }$ and $%
a_{\alpha }(u_{\alpha })$ is uniformly bounded by some positive constant. Hence, $\{\Lambda _{\alpha }(u_{\alpha
})\}_{\alpha }$ is bounded in $L^2(0,T;H^1(\Omega))$.

Let $V=H^1(\Omega)\cap L^\infty(\Omega)$ be the Banach space with the norm 
$||\cdot||_V = ||\cdot||_{H^{1}(\Omega )} + ||\cdot||_{L^{\infty}(\Omega )}$. 
For any $\phi $ in $L^{\infty}(0,T;V)$,
$$
\left\vert \int_{0}^{T}\langle \partial_{t}\Lambda _{\alpha }(u_{\alpha }),\phi
\rangle _{(H^{1})^{\ast },H^{1}}\,dt\right\vert 
=
\left\vert \int_{0}^{T}\langle \partial_{t} u_{\alpha },a_{\alpha }(u_{\alpha })\phi
\rangle _{(H^{1})^{\ast },H^{1}}\,dt\right\vert 
\le C ||\phi ||_{L^{\infty}(0,T;V)} 
$$
with a constant $C$ independent of $\alpha$. This follows from the variational formulation (\ref{var}) with 
$\phi$ replaced by $a_{\alpha }(u_{\alpha })\phi$ and from the estimates (\ref{alpest}).
We see thus that the family $\{\partial _{t}\Lambda _{\alpha }(u_{\alpha })\}_{\alpha }$ is bounded in 
$L^{1}(0,T;V^{\ast })$. We remind also that $\{\Lambda _{\alpha }(u_{\alpha})\}_{\alpha }$ is bounded in $L^2(0,T;H^1(\Omega))$.
Aubin-Simon compactness lemma \cite{Simon} applied to the triple of spaces 
$H^1(\Omega) \subset L^2(\Omega) \subset V^*$ allows us now to conclude that the set $\{\Lambda _{\alpha
}(u_{\alpha })\}_{\alpha }$ is relatively compact in $L^{2}(0,T;L^2(\Omega))=L^{2}(Q_T)$.

\underline{3rd step: Passage to the limit}\\
The aim now is to pass to the limit $\alpha \rightarrow 0$ in the
variational formulation (\ref{var}) in order to show the existence of a weak
solution of problem (\ref{P}). The a priori estimates of the last step
permit us to show, that there exists a function $u\in L^{2}(Q_{T})$ satisfying $%
0\leq u\leq M$ in $Q_{T}$ and such that after extracting a sub-sequence from $%
\{u_{\alpha }\}_{\alpha }$, we have
\begin{equation*}
u_{\alpha }\rightharpoonup _{\alpha \rightarrow 0}u\quad \text{in}\quad
L^{2}(Q_{T})\,,\quad u_{\alpha }|_{\Gamma _{\perp }}\rightharpoonup _{\alpha
\rightarrow 0}u|_{\Gamma _{\perp }}\quad \text{in}\quad L^{2}([0,T]\times
\Gamma _{\perp })\,,
\end{equation*}
\begin{equation*}
\nabla _{\perp }u_{\alpha }\rightharpoonup _{\alpha \rightarrow 0}\nabla
_{\perp }u\quad \text{in}\quad L^{2}(Q_{T})\,\quad \text{and\quad }\partial
_{t}u_{\alpha }\rightharpoonup _{\alpha \rightarrow 0}\partial _{t}u\quad 
\text{in}\quad L^{2}(0,T;(H^{1}(\Omega ))^{\ast })\,.
\end{equation*}%
To pass to the limit in the non-linear term, we use first the fact that $%
\{\Lambda _{\alpha }(u_{\alpha })\}_{\alpha }$ is bounded in $%
L^{2}(0,T;H^{1}(\Omega ))$, so that there exists some function $w\in
L^{2}(0,T;H^{1}(\Omega ))$ satisfying 
\begin{equation*}
\Lambda _{\alpha }(u_{\alpha })\rightharpoonup _{\alpha \rightarrow 0}w\quad 
\text{in}\quad L^{2}(0,T;H^{1}(\Omega ))\,.
\end{equation*}%
In order to identify the function $w$, we need some pointwise convergence of
the sequence $u_{\alpha }$. For this, recall that the sequence $\{\Lambda
_{\alpha }(u_{\alpha })\}_{\alpha }$ is relatively compact in $L^{2}(Q_{T})$.
Thus, up to a sub-sequence 
\begin{equation*}
\Lambda _{\alpha }(u_{\alpha })\rightarrow _{\alpha \rightarrow 0}w\quad 
\text{in}\quad L^{2}(Q_{T})\,,\quad \text{hence}\quad \quad \Lambda _{\alpha
}(u_{\alpha })\rightarrow _{\alpha \rightarrow 0}w\,,\quad \text{a.e. in}%
\quad Q_{T}\,.
\end{equation*}%
Since for any fixed $u\in [0,M]$, $\Lambda _{\alpha }(u)\rightarrow
_{\alpha \rightarrow 0} \Lambda(u) := \frac{1}{m}u^{m}$, we have $u_{\alpha }\rightarrow _{\alpha \rightarrow 0} \Lambda^{-1}w$ a.e. in
$Q_{T}$. This permits to identify the
function $\Lambda^{-1}w$ with $u$, so that $w=\frac{1}{m}u^{m}\in L^{2}(0,T;H^{1}(\Omega ))$.

All these convergences allow now to pass to the limit in the variational
formulation (\ref{var}) and to conclude the proof of existence for problem (%
\ref{Pm}).

\underline{4th step: Positivity and uniqueness}\\
Let $u$ be a weak solution to (\ref{Pm}).  
We use first the sub-solution constructed in Lemma \ref{subsol} and the comparison principle in Lemma \ref{compPr} to verify that $u\ge c e^{-Kt}$ with some positive constants $c$ small enough and $K$ large enough. We remark then that $M$ is a super-solution to (\ref{Pm}). Again, by Lemma \ref{compPr} we see that $u\le M$. 

Suppose now that (\ref{weak}) admits two solutions $u_{1}$ and $u_{2}$ in $%
\mathcal{W}$ with the same initial condition $u_{1}=u_{2}=u^{0}$ at $t=0$.
We know already that they are both strictly positive, so that Lemma \ref{compPr} implies $u_{1}\le
u_{2}$ on $Q_{\infty }$. Since the two solutions $u_{1}$ and $u_{2}$
are perfectly interchangeable in the above argument, we have also $u_{1}\geq
u_{2}$ and thus $u_{1}=u_{2}$ on $Q_{\infty }$.
\end{proof}
Let us remark here, that due to the strict positivity of the solution, in particular to the property that $u \ge c e^{-Kt}$ a.e. on $Q_{\infty}$, with some $c>0$ and $K>0$, we have
\begin{corollary}
Under the hypothesis of theorem \ref{mainTh}, the weak solution $u$ of (\ref{Pm}) belongs to the more regular space
\begin{align*}
\tilde{\mathcal{W}}:=\{& u\in L^{\infty }(Q_{\infty}),\text{ such that }\forall\, T>0 \,:\,\,\,
u\in L^{2}(0,T;H^{1}(\Omega ))\,,\,\,\, \partial
_{t}u\in L^{2}(0,T;(H^{1}(\Omega ))^{\ast })\}\,.
\end{align*}%
 Moreover, one has ${\frac{d}{dt}}||u(t,\cdot)||_{L^{2}(\Omega )}\leq 0$\,.
\end{corollary}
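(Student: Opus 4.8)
The plan is to use the strict positivity bound $u\ge c\,e^{-Kt}$ furnished by Theorem~\ref{mainTh} to bootstrap the spatial regularity of $u$, and then to run the classical energy identity to obtain the monotonicity of its $L^2$-norm.

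\emph{Step 1: upgrading the regularity.} First I would fix $T>0$ and set $\beta_T:=c\,e^{-KT}>0$, so that Theorem~\ref{mainTh} gives $\beta_T\le u\le M$ a.e. on $Q_T$. On this set $|u|^{m-1}\ge\beta_T^{m-1}>0$, hence $|u|^{1-m}\le\beta_T^{1-m}$ is bounded, and since $|u|^{m-1}\nabla_{||}u\in L^2(Q_T)$ by the definition of $\mathcal{W}$, the identity
\[
\nabla_{||}u=|u|^{1-m}\bigl(|u|^{m-1}\nabla_{||}u\bigr)
\]
shows $\nabla_{||}u\in L^2(Q_T)$. Combined with $\nabla_{\perp}u\in L^2(Q_T)$ this gives $\nabla u\in L^2(Q_T)$, and together with $u\in L^{\infty}(Q_T)\subset L^2(Q_T)$ it yields $u\in L^2(0,T;H^1(\Omega))$. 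Since $T>0$ is arbitrary and $\partial_t u\in L^2(0,T;(H^1(\Omega))^{\ast})$ already holds for $u\in\mathcal{W}$, this establishes $u\in\tilde{\mathcal{W}}$. In particular $u\in W_2^1(0,T;H^1(\Omega),L^2(\Omega))$, so $u\in C([0,T];L^2(\Omega))$ (cf.~\cite{LM}) and the integration-by-parts formula $\int_{t_1}^{t_2}\langle\partial_t u,u\rangle_{(H^1)^{\ast},H^1}\,dt=\tfrac12\|u(t_2,\cdot)\|_{L^2(\Omega)}^2-\tfrac12\|u(t_1,\cdot)\|_{L^2(\Omega)}^2$ holds for all $0\le t_1\le t_2$.

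\emph{Step 2: the energy identity.} Since now $u\in\mathcal{D}=L^2(0,T;H^1(\Omega))$, it becomes an admissible test function in the variational formulation (\ref{weak}). Given $0\le t_1\le t_2$, I would take $\phi$ to be $u$ on $(t_1,t_2)$ and $0$ outside; inserting it into (\ref{weak}) and using the integration-by-parts formula from Step~1 produces
\begin{align*}
\tfrac12\|u(t_2,\cdot)\|_{L^2(\Omega)}^2 &+\int_{t_1}^{t_2}\!\int_{\Omega}A_{||}|u|^{m-1}|\nabla_{||}u|^2\,dx\,dt+\int_{t_1}^{t_2}\!\int_{\Omega}A_{\perp}\nabla_{\perp}u\cdot\nabla_{\perp}u\,dx\,dt\\
&+\gamma\int_{t_1}^{t_2}\!\int_{\Gamma_{\perp}}u^2\,d\sigma\,dt=\tfrac12\|u(t_1,\cdot)\|_{L^2(\Omega)}^2.
\end{align*}
By Hypothesis~\ref{hypo} we have $A_{||}\ge A_0>0$, $A_{\perp}$ uniformly elliptic (see (\ref{eq:J48a1})--(\ref{eq:J48a3})) and $\gamma\ge0$; since moreover $|u|^{m-1}\ge0$, the three integral terms on the left are non-negative, so $\|u(t_2,\cdot)\|_{L^2(\Omega)}\le\|u(t_1,\cdot)\|_{L^2(\Omega)}$ whenever $t_1\le t_2$. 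Thus $t\mapsto\|u(t,\cdot)\|_{L^2(\Omega)}$ is continuous and non-increasing, which gives $\frac{d}{dt}\|u(t,\cdot)\|_{L^2(\Omega)}\le0$.

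\emph{Expected main obstacle.} I do not expect a genuine obstacle: the statement is essentially a consequence of the positivity estimate of Theorem~\ref{mainTh}, which is exactly what removes the degeneracy of the parallel diffusion term and lifts $u$ into $H^1$ in space. The only points requiring care are that the integration-by-parts identity for $\int\langle\partial_t u,u\rangle$ must be invoked only \emph{after} Step~1 has placed $u$ in $W_2^1(0,T;H^1(\Omega),L^2(\Omega))$, and that the three dissipation terms in the energy identity carry the correct sign — which is precisely where the uniform ellipticity of $A_{\perp}$, the positivity of $A_{||}$ and $\gamma\ge0$ enter.
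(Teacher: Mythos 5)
Your proposal is correct and follows exactly the route the paper intends: the paper justifies the corollary in one line by invoking the strict positivity $u\ge c\,e^{-Kt}$, which is precisely your Step~1 (the lower bound makes $|u|^{1-m}$ bounded on $Q_T$ and lifts $\nabla_{||}u$ into $L^2$, hence $u\in L^2(0,T;H^1(\Omega))$), and your Step~2 is the standard energy identity with $\phi=u$, legitimate only after Step~1, yielding the monotonicity of the $L^2$-norm. No gaps; the only point worth noting is that your care in invoking the Lions--Magenes integration-by-parts formula only after establishing $u\in W_2^1(0,T;H^1(\Omega),L^2(\Omega))$ is exactly the right order of operations.
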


\section{Numerical method} \label{SEC3}

\subsection{Semi-discretization in space} \label{SEC32}

The singular perturbation problem (\ref{P}) is a highly anisotropic
equation. Its variational formulation reads: find $u(t,\cdot) \in \mathcal{V}:=H^1(\Omega)$ such that
\begin{eqnarray}
(P)\,\,\,
&&\langle \partial _{t}u(t,\cdot ),\,v \rangle
_{\mathcal{V}^{\ast },\mathcal{V}}+{1 \over \eps} \int_{\Omega }A_{||}|u|^{5/2}\nabla
_{||}u(t,\cdot )\cdot \nabla _{||}v\, dx  \label{weak_bis} \\
&& \hspace{2cm} +\int_{\Omega }A_{\perp }\nabla _{\perp }u(t,\cdot )\cdot \nabla _{\perp
}v\, dx+\gamma \int_{\Gamma _{\perp }}u(t,\cdot )v \,d\sigma
=0,\quad \forall v \in \mathcal{V}  \notag\,
\end{eqnarray}
for almost every $t \in (0,T)$. 
As mentioned already in Section \ref{SEC2}, this problem becomes ill-posed if we take formally the limit $\eps\to 0$. Indeed, only the leading term survives in this limit, so that any function from the space 
$$  
  {\cal G}:=\{p\in {\cal V}~/~\nabla _{\parallel}p=0 \text{ in }  \Omega \}
$$
would be a solution. It is easy to establish, however, the well-posed problem for the limit of the solutions to (P) as $\eps\to 0$. Indeed, one can restrain the test functions in (P) to be in the space ${\cal G}$ so that the $\eps$-dependent term disappears and the correct problem in the limit  $\eps\to 0$ reads: find $u(t,\cdot) \in \mathcal{G}$ such that
$$
(L)\,\,\,
\langle \partial _{t}u(t,\cdot ),\,v \rangle
_{\mathcal{V}^{\ast },\mathcal{V}}
 +\int_{\Omega }A_{\perp }\nabla _{\perp }u(t,\cdot )\cdot \nabla _{\perp
}v\, dx+\gamma \int_{\Gamma _{\perp }}u(t,\cdot )v \,d\sigma
=0,\quad \forall v \in \mathcal{G}  
$$
for almost every $t \in (0,T)$.

The discussion above shows that a straight-forward discretization of problem (P) may lead to very inaccurate results when $\eps << 1$. Indeed, setting $\eps=0$ would result in a singular problem, so that the problem with $\eps << 1$ would be very ill-conditioned. To cope with this difficulty and to obtain a numerical scheme which is uniformly accurate with respect to $\eps$, we introduce an Asymptotic-Preserving reformulation, very similar to the one introduced in \cite{DLNN}. The idea is to rewrite the singularly perturbed problem (\ref{weak_bis}) in an equivalent form, which is however well-posed when one sets there formally $\eps=0$ and gives moreover the correct limit problem (L). In order to do this, we introduce the auxiliary unknown $q$ by the relation $\eps\nabla_{||}q=u^{5/2}\nabla_{||}u$ in $\Omega$ and $q=0$ on $\Gamma_{in}$, which rescales the nasty part of the equation permitting to get rid of the terms of order $O(1/\eps)$. The reformulated problem, called in the sequel the Asymptotic-Preserving reformulation (AP-problem) reads: find $(u(t,\cdot),q(t,\cdot))\in {\cal V} \times {\cal L}$, solution of
\begin{equation}
  (AP)\,\,\, 
  \left\{ 
    \begin{array}{l}
      \ds \langle\frac{\partial u}{\partial t},\, v \rangle_{\mathcal{V}^{\ast },\mathcal{V}} + \int_{\Omega }(
      A_{\perp }\nabla _{\perp }u)\cdot \nabla _{\perp
      }v\,dx+\int_{\Omega } A_{\parallel} \nabla _{\parallel}q\cdot
      \nabla_{\parallel}v\,dx
      +\gamma \int_{\Gamma_{\perp}} uv\,ds
      =0, \\[1mm]
      \hspace{12cm}\forall v\in {\cal V}
      \\[1mm] \ds \int_{\Omega }A_{\parallel} u^{5/2}\nabla_{\parallel}u\cdot
      \nabla_{\parallel}w\,dx-\varepsilon \int_{\Omega } A_{\parallel} \nabla
      _{\parallel}q\cdot \nabla _{\parallel}w\,dx=0,\quad \forall w\in {\cal
        L}\,,
    \end{array}%
  \right.  \label{Pa}
\end{equation}
where 
\begin{gather}
  {\cal L}:=\{q\in L^{2}(\Omega )~/~\nabla _{\parallel}q\in
  L^{2}(\Omega )\text{ and }  q|_{\Gamma _{in}}=0\}. 
  \label{eq:Jobb}
\end{gather}
System (\ref{Pa}) is an equivalent reformulation (for fixed $\eps>0$)
of the original P-problem (\ref{weak_bis}). Putting now formally $\eps =0$ in (AP) leads to the well-posed limit problem
\begin{equation}
  (L')\,\,\, 
  \left\{ 
    \begin{array}{l}
      \ds \langle\frac{\partial u}{\partial t},\, v \rangle_{\mathcal{V}^{\ast },\mathcal{V}} + \int_{\Omega }(
      A_{\perp }\nabla _{\perp }u)\cdot \nabla _{\perp
      }v\,dx+\int_{\Omega } A_{\parallel} \nabla _{\parallel}q\cdot
      \nabla_{\parallel}v\,dx
      +\gamma \int_{\Gamma_{\perp}} uv\,ds
      =0,\\[1mm]
       \hspace{12cm} \forall v\in {\cal V}
      \\[1mm] \ds \int_{\Omega }A_{\parallel} u^{5/2}\nabla_{\parallel}u\cdot
      \nabla_{\parallel}w\,dx=0,\quad \forall w\in {\cal
        L}\,,
    \end{array}%
  \right.  \label{LP}
\end{equation}
which is equivalent to problem (L). Note that $q$ acts here as a Lagrange multiplier for the constraint $u\in\mathcal{G}$, which provides the uniqueness of the solution. Hence the AP-reformulation permits a continuous transition from the $P$-model to the $L$-model, which enables the uniform accuracy of the scheme with respect to $\eps$.

Let us now choose a triangulation of the domain $\Omega $ with triangles or quadrangles of order $h$ and introduce the finite element spaces ${\cal V}_{h} \subset {\mathcal V}$ and ${\cal L}_{h}\subset {\mathcal L}$ of
type $\mathbb P_{k}$ or $\mathbb Q_{k}$ on this mesh. The finite element 
discretization of (\ref{Pa}) writes then: find
$(u_{h},q_{h}) \in {\cal V}_{h} \times {\cal L}_h$ such that
\begin{equation}
  (AP)_h\,\,\, 
  \left\{ 
  \begin{array}{l}
    \ds \int_{\Omega } \frac{\partial u_h}{\partial t}v_h \, dx + \int_{\Omega }(
    A_{\perp }\nabla _{\perp }u_h)\cdot \nabla _{\perp
    }v_h\,dx+\int_{\Omega } A_{\parallel} \nabla _{\parallel}q_h\cdot
    \nabla_{\parallel}v_h\,dx
    +\gamma \int_{\Gamma_{\perp}} u_hv_h\,ds
    =0, \\[1mm]
    \hspace{12cm} \forall v_h\in {\cal V}_h
    \\[1mm] \ds \int_{\Omega }A_{\parallel} u_h^{5/2}\nabla_{\parallel}u_h\cdot
    \nabla_{\parallel}w_h\,dx-\varepsilon \int_{\Omega } A_{\parallel} \nabla
    _{\parallel}q_h\cdot \nabla _{\parallel}w_h\,dx=0, \quad \forall w\in {\cal
      L}_h\,.
  \end{array}%
  \right.
  \label{APh}
\end{equation}%
Remark that this system is continuous in time and also nonlinear, such that one has to develop now a procedure for the linearization and the discretization in time. This procedure has to be chosen carefully, such that the AP-property developed so far, is not destroyed. This is the aim of the next section.
\subsection{Semi-discretization in time} \label{SEC31}
In order to approach numerically the time derivative in (\ref{APh}), we introduce three different schemes : a standard first order,
implicit Euler scheme, the Crank-Nicolson scheme and a second
order, L-stable Runge-Kutta method. We show in the following that the
first order Euler-scheme is stable and asymptotic-preserving. The Crank-Nicolson schemes gives reliable
results and second order convergence under certain assumptions, but is
not asymptotic-preserving. Thus, if second order
accuracy in time is desired, the L-stable Runge-Kutta method has to be
applied. All three methods are exposed to numerical tests and
compared in Section \ref{sec:numres}.
\subsubsection{Implicit Euler scheme} 
\quad\\

Introducing the forms
\begin{gather}
  (\Theta,\chi):= \int_{\Omega} \Theta \chi \, dx 
  \,,   \label{eq:Jmbb0}\\
  a_{\parallel nl}(\Psi,\Theta,\chi):= \int_{\Omega} A_\parallel \Psi^{5/2} \nabla_\parallel \Theta \cdot
  \nabla_\parallel \chi \, dx 
  \,,   \label{eq:Jmbb1}\\
  a_\parallel(\Theta,\chi):= \int_{\Omega} A_\parallel \nabla_\parallel \Theta \cdot
  \nabla_\parallel \chi \, dx 
  \,, \quad  \quad a_{\perp}(\Theta,\chi):=
  \int_{\Omega} A_{\perp} \nabla_{\perp} \Theta \cdot \nabla_{\perp}
  \chi \, dx \,,
  \label{eq:Jmbb3}
\end{gather}
allows us to write the first order, implicit Euler method in the compact notation: Find $(u_h^{n+1},q_h^{n+1}) \in {\mathcal V}_h \times {\mathcal L}_h$, solution of
\begin{gather}
  (E_{AP}) \,\,\, 
  \left\{
  \begin{array}{l}
    (u_h^{n+1},v_h) + 
    \tau \left(a_\perp (u_h^{n+1},v_h) + a_\parallel (q_h^{n+1},v_h) 
    + \gamma \int_{\Gamma_{\perp}} u_h^{n+1}v_h \, ds\right)
    = (u_h^{n},v_h)
    \\[4mm]
    a_{\parallel nl} (u_h^{n},u_h^{n+1},w_h) - \eps a_\parallel
    (q_h^{n+1},w_h)
    = 0\,,
  \end{array}
  \right.
  \label{eq:Jpcb},
\end{gather}
where the non linear term $(u_h^{n+1})^{5/2}$ was replaced by a first order approximation in $\tau $ : 
\begin{gather}
  (u_h^{n+1})^{5/2} = (u_h^{n}+O(\tau ))^{5/2} = (u_h^{n})^{5/2} +O(\tau )
  \label{eq:Jqcb}.
\end{gather}
A slightly different first order AP-scheme was introduced in \cite{MN}
for the resolution of the same temperature balance problem. There, the
(P)-problem was firstly discretized in time (implicit Euler), then
linearized by a fixed point mapping, and finally the AP reformulation applied. The numerical results obtained in \cite{MN} are similar to the
present ones.

\subsubsection{Linearized Crank-Nicolson scheme}
\quad\\
To construct a scheme, which is second order in time, one can come to
the idea to employ the Crank-Nicolson scheme: Find
$(u_h^{n+1},q_h^{n+1}) \in {\mathcal V}_h \times {\mathcal L}_h$,
solution of
\begin{gather}
  \left\{
  \begin{array}{l}
    (u_h^{n+1},v_h) + 
    \tau \left(a_\perp (u_h^{n+1/2},v_h) + a_\parallel (q_h^{n+1},v_h) 
    + \gamma \int_{\Gamma_{\perp}} u_h^{n+1/2}v_h \, ds\right)
    =  (u_h^{n},v_h)
    \\[4mm]
    a_{\parallel nl} (u_h^{n+1/2},u_h^{n+1/2},w_h) - \eps a_\parallel
    (q_h^{n+1},w_h)
    = 0\,.
  \end{array}
  \right.
  \label{eq:Jtbb}
\end{gather}
As one can observe, we have to deal for each fixed $n$, with a
nonlinear equation. In the linear terms, one can set $u_h^{n+1/2} = \frac{1}{2}\left(u_h^{n+1}+u_h^{n}\right)$. To linearize the term $ a_{\parallel nl} (u_h^{n+1/2},u_h^{n+1/2},w_h)$ however, we shall use the standard
linear extrapolation method. In other words, the non-linearity in this last formula, $(u_h^{n+1/2})^{5/2}$, will be replaced by a linearized second
order approximation in $\tau $:
\begin{gather}
  (u_h^{n+1/2})^{5/2} = \left(u_h^{n}+ \frac{1}{2} \left(u_h^{n} -u_h^{n-1}
  \right) +O(\tau^2)\right)^{5/2} = \left(u_h^{n}+ \frac{1}{2} \left(u_h^{n} -u_h^{n-1}
  \right)\right)^{5/2}  +O(\tau ^2)
  \label{eq:Jvbb}.
\end{gather}
 The resulting linear system reads finally: Find $(u_h^{n+1},q_h^{n+1}) \in {\mathcal V}_h \times {\mathcal L}_h$, solution of
\begin{gather}
  (CN_{AP}) \,\,\, 
  \left\{
  \begin{array}{l}
    (u_h^{n+1},v_h) + 
    {\tau  \over 2} \left(a_\perp (u_h^{n+1},v_h) 
    + \gamma \int_{\Gamma_{\perp}} u_h^{n+1}v_h\, ds  \right) + \tau a_\parallel (q_h^{n+1},v_h)
    \\[3mm]
    \qquad \qquad =
    (u_h^{n},v_h)  
    -  {\tau  \over 2}\left(a_\perp (u_h^{n},v_h)  
    + \gamma \int_{\Gamma_{\perp}} u_h^{n}v_h \, ds \right) \,,
    \\[4mm]
    {1 \over 2} 
    a_{\parallel nl}
    \left(\frac{1}{2}\left(3u_h^{n}-u_h^{n-1}\right),u_h^{n+1},w_h\right)
    - \eps a_\parallel 
    (q_h^{n+1},w_h)\\[3mm]
    \qquad \qquad =
    -{1 \over 2}
    a_{\parallel nl}
    \left(\frac{1}{2}\left(3u_h^{n}-u_h^{n-1}\right),u_h^{n},w_h\right)\,.
  \end{array}
  \right.
\label{eq:Jqbb}
\end{gather}

Unfortunately, this method is not Asymptotic-Preserving. 
For
small values of $\eps$  one expects that
the solution will immediately fall into the space of functions almost
constant in the direction of the anisotropy, no matter what initial
condition was imposed. In the case of the Crank-Nicolson scheme for 
large time steps compared to $\eps/ (u^n_h)^{5/2}$, the
second equation in (\ref{eq:Jqbb}) will constrain the numerical solution to oscillate if the
initial condition is not already in the suitable space. This requires
the restrictive choice of a time step of the order of $\eps/ (u^n_h)^{5/2}$, which
yields the method inapplicable in general cases. In other words, the Crank-Nicolson scheme is unable to model diffusion processes for large $\Delta t$, due to the inadequate approximation of the damping processes. The Crank-Nicolson scheme is A-stable but not L-stable and the AP-property of a scheme is strongly related to the L-stability of the scheme.

As an example of the non-convergence of the ($CN_{AP}$) scheme in a general case,
we show some numerical results corresponding to a test case defined in the
Section \ref{sec:gauss}. The initial condition is a Gaussian peak
located in the center of the computational domain with a maximum of $10^5 K$. If the time step $\tau $ is too large, than $u^n_h$ will
immediately reach negative values and thus the numerical algorithm
will fail in the next iteration. However, if $\tau $ is sufficiently
small the ($CN_{AP}$) scheme is of second order in time. Unfortunately, the
biggest time step that does not provoke oscillations in the numerical
solution, is of the order of $10^{-16}s$, for an initial Gaussian peak
of $10^5 K$. This makes the ($CN_{AP}$) scheme of no practical use in real
simulations. These results are plotted on Figure \ref{fig:gauss_bis}.

\def\xxxa{0.47\textwidth}
\begin{figure}[!ht]
  \centering
  \subfigure[$\tau  = 0.1$]
  {\includegraphics[angle=0,width=\xxxa]{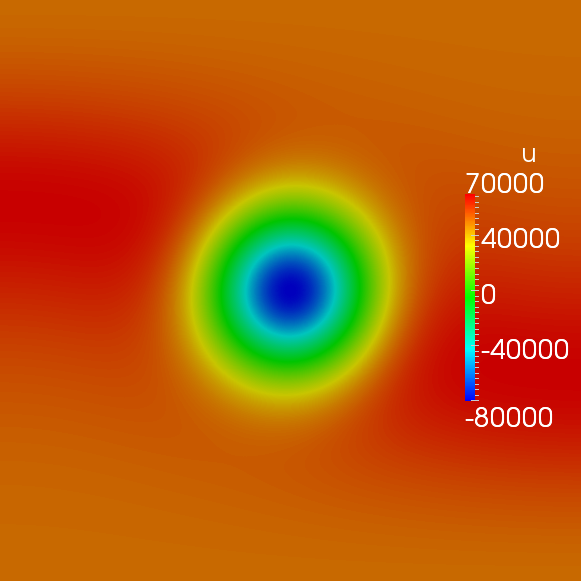}}
  \subfigure[$\tau  = 10^{-16}$]
  {\includegraphics[angle=0,width=\xxxa]{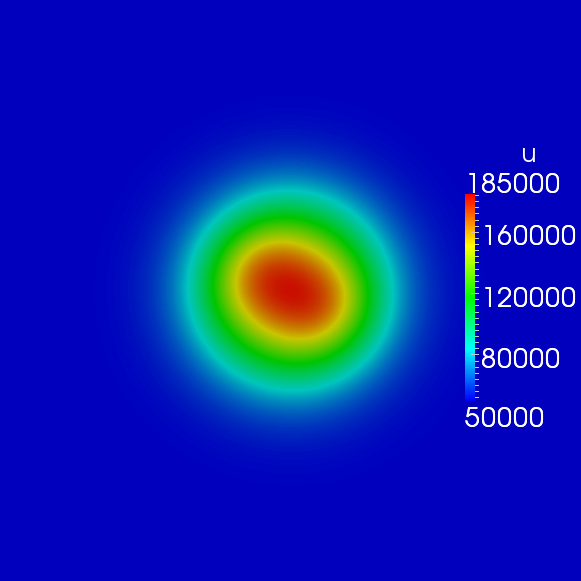}}

  \caption{Non convergence of the ($CN_{AP}$) scheme. Negative values of
    $u^n_h$ are obtained after one iteration of the method, for big
    time steps. If the time step is sufficiently small, the method
    converges.}
  \label{fig:gauss_bis}
\end{figure}

\subsubsection{L-stable Runge-Kutta method}
\quad\\

As we are interested in an AP-scheme, which is second order accurate in time, we propose  now a two stage Diagonally Implicit
Runge-Kutta (DIRK) second order scheme, which does not suffer from the
limitations of the Crank-Nicolson discretization. The scheme is
developed according to the following Butcher's diagram:
\begin{gather}
  \begin{array}{c|cc}
    \lambda & \lambda & 0 \\
    1 & 1-\lambda & \lambda \\
    \hline
    & 1-\lambda & \lambda 
  \end{array}
  \label{eq:Jrcb}
\end{gather}
with $\lambda = 1- {1 \over \sqrt{2}}$. 

\begin{remark}(Butcher's diagram)
  The coefficients of the $s$-stage Runge-Kutta method are usually displayed in
  a Butcher's diagram :
  \begin{gather}
    \begin{array}{c|ccc}
      c_1 & a_{11} & \cdots & a_{1s} \\
      \vdots & \vdots & & \vdots \\
      c_s & a_{s1} & \cdots & a_{ss}\\
      \hline
       & b_1 & \cdots & b_s
    \end{array}.
  \end{gather}
Applying this method to approximate to following problem
  \begin{gather}
    \frac{\partial u}{\partial t} = Lu + f(t)\,,
    \label{eq:J6cb}
  \end{gather}
 reads: For given $u^n$, being an
  approximation of $u(t_n)$, the $u^{n+1}$ is determined accordingly to :
  \begin{gather}
    u_i = u^{n} + \tau \sum_{j=1}^{s} a_{ij} (Lu_j + f(t+ c_j\tau)) ,
    \\
    u^{n+1} = u^n + \sum_{j=1}^{s} b_j u_j
    \label{eq:J8cb}.
  \end{gather}
  If $b_j = a_{sj}$ for $j=1,\ldots, s$ than $u^{n+1} = u_s$.
\end{remark}

The scheme (\ref{eq:Jrcb}) is known to be
L-stable, thus providing the Asymptotic Preserving property. The
scheme writes: Find $(u_h^{n+1},q_h^{n+1}) \in {\mathcal V}_h \times {\mathcal L}_h$, solution of
\begin{gather}
  (RK_{AP}) \,\,\, 
  \left.
  \begin{array}{l}
    \left\{
    \begin{array}{l}
      (u_{1,h}^{n+1},v_h)+ 
      \tau \lambda \left(a_\perp (u_{1,h}^{n+1},v_h) 
      + \gamma \int_{\Gamma_{\perp}} u_{1,h}^{n+1}v_h\, ds  + a_\parallel
      (q_{1,h}^{n+1},v_h) \right)      \\[3mm]
      \qquad \qquad \qquad = (u_{h}^{n},v_h)    
      \\[4mm]
      a_{\parallel nl}
      \left(u_{h}^{n}+\lambda (u_h^n - u_h^{n-1} ),u_{1,h}^{n+1},w_h\right)
      - \eps a_\parallel (q_{1,h}^{n+1},w_h)= 0
    \end{array}
    \right.
    \\
    \\
    \left\{
    \begin{array}{l}
      (u_{2,h}^{n+1},v_h)+ 
      \tau \lambda \left(a_\perp (u_{2,h}^{n+1},v_h) 
      + \gamma \int_{\Gamma_{\perp}} u_{2,h}^{n+1}v_h\, ds   + a_\parallel
      (q_{2,h}^{n+1},v_h) \right)      \\[3mm]
      \qquad \qquad \qquad 
      = (u_{h}^{n},v_h) + {1-\lambda \over \lambda } \left( u_{1,h}^{n+1}-u_{h}^{n},v_h\right)
      \\[4mm]
      a_{\parallel nl}
      \left(u_{h}^{n}+(u_h^n - u_h^{n-1} ),u_{2,h}^{n+1},w_h\right)
      - \eps a_\parallel (q_{2,h}^{n+1},w_h)= 0
    \end{array}
    \right.
    \\
    \\
    u_h^{n+1} = u_{2,h}^{n+1}\,, \qquad q_h^{n+1} = q_{2,h}^{n+1}\,,
    \end{array}
  \right.
  \label{eq:Jscb}
\end{gather}
with $u_{1,h}^{n+1}$ (respectively $u_{2,h}^{n+1}$) being the solution of the first
(respectively second) stage of the Runge-Kutta method. The terms $u_{h}^{n}+\lambda
(u_h^n - u_h^{n-1} )$ and $u_{h}^{n}+(u_h^n - u_h^{n-1} )$ are
respectively the second order time-approximations of $u_h(t+\lambda
\tau )$ and $u_h(t+\tau )$, used to linearize the problem.

For each time step we have therefore to assemble and solve two
linearized problems. This method is two times slower than the
Crank-Nicolson scheme, with the advantage however of maintaining the
AP-property of the scheme, advantage which is crucial for $0 < \eps
\ll 1$.
\section{Numerical results}\label{sec:numres}

In this section we compare the proposed implicit Euler-AP and DIRK-AP
schemes with a standard linearized implicit Euler discretization of
the initial singular perturbation problem (\ref{P}), given by
\begin{gather}
  (P)_{h\tau } \quad
  (u_h^{n+1},v_h) + 
  \tau \left(a_\perp (u_h^{n+1},v_h) + \frac{1}{\eps}a_{\parallel nl} (u_h^{n},u_h^{n+1},v_h) 
  + \gamma \int_{\Gamma_{\perp}} u_h^{n+1}v_h \, ds \right)
  = 
  (u_h^{n},v_h)  
  \label{eq:Jtcb}.
\end{gather}

\subsection{Discretization} \label{Discr}

Let us present the space discretization in a 2D case. We  
consider a square computational domain $\Omega =
[0,1]\times [0,1]$. All simulations are performed on structured
meshes. Let us introduce the Cartesian, homogeneous grid
\begin{gather}
  x_i = i / N_x \;\; , \;\; 0 \leq i \leq N_x \,, \quad
  y_j = j / N_y \;\; , \;\; 0 \leq j \leq N_y
  \label{eq:Jp8a},
\end{gather}
where $N_x$ and $N_y$ are positive even constants, corresponding to
the number of discretization intervals in the $x$-
resp. $y$-direction. The corresponding mesh-sizes are denoted by $h_x
>0$ resp. $h_y >0$. Choosing a $\mathbb Q_2$ finite element method
($\mathbb Q_2$-FEM), based on the following quadratic base functions

\begin{gather}
  \theta _{x_i}=
  \left\{
    \begin{array}{ll}
      \frac{(x-x_{i-2})(x-x_{i-1})}{2h_x^{2}} & x\in [x_{i-2},x_{i}],\\
      \frac{(x_{i+2}-x)(x_{i+1}-x)}{2h_x^{2}} & x\in [x_{i},x_{i+2}],\\
      0 & \text{else}
    \end{array}
  \right.\,, \quad 
  \theta _{y_j} =
  \left\{
    \begin{array}{ll}
      \frac{(y-y_{j-2})(y-y_{j-1})}{2h_y^{2}} & y\in [y_{j-2},y_{j}],\\
      \frac{(y_{j+2}-y)(y_{j+1}-y)}{2h_y^{2}} & y\in [y_{j},y_{j+2}],\\
      0 & \text{else}
    \end{array}
  \right.
  \label{eq:Js8a1}
\end{gather}
for even $i,j$  and
\begin{gather}
  \theta _{x_i}=
  \left\{
    \begin{array}{ll}
      \frac{(x_{i+1}-x)(x-x_{i-1})}{h_x^{2}} & x\in [x_{i-1},x_{i+1}],\\
      0 & \text{else}
    \end{array}
  \right.\,, \quad 
  \theta _{y_j} =
  \left\{
    \begin{array}{ll}
      \frac{(y_{j+1}-y)(y-y_{j-1})}{h_y^{2}} & y\in [y_{j-1},y_{j+1}],\\
      0 & \text{else}
    \end{array}
  \right.
  \label{eq:Js8a2}
\end{gather}
for odd $i,j$, we define the space
$$
W_h := \{ v_h = \sum_{i,j} v_{ij}\,  \theta_{x_i} (x)\,  \theta_{y_j}(y)\}\,.
$$
The spaces ${\cal V}_h$ and ${\cal L}_h$ are then defined by
\begin{equation*}
  {\cal V}_{h}={\cal W}_{h}\,, \quad {\cal L}_{h}=\{q_{h}\in {\cal
    V}_{h}\,, \text{ such that\,\,\,} q_{h}|_{\Gamma _{in}}=0\}. 
\end{equation*}
The matrix elements are computed using the 2D Gauss quadrature
formula, with 3 points in the $x$ and $y$ direction:
\begin{gather}
  \int_{-1}^{1}\int_{-1}^{1}f (\xi,\eta)\, d\xi\, d\eta =
  \sum_{i,j=-1}^{1} \omega _{i}\omega _{j} f (\xi_i,\eta_j)\,,
  \label{eq:Jk9a}
\end{gather}
where $\xi_0=\eta_0=0$, $\xi_{\pm 1}=\eta_{\pm 1}=\pm\sqrt {\frac{3}{5}}$,
$\omega _0 = 8/9$ and $\omega _{\pm 1} = 5/9$, which is exact for
polynomials of degree 5. Linear systems obtained for all methods in
these numerical experiments are solved using a LU decomposition,
implemented by the MUMPS library.

\subsection{Numerical tests}

\subsubsection{Known analytical solution}
\quad\\

First, let us construct a numerical test case with a known
solution. Finding an analytical solution for an arbitrary $b$-field
presents a considerable difficulty. In the previous paper
\cite{DDLNN}, we presented a way to find such a solution. Let us
recall briefly how to do this. The starting point is a limit solution
\begin{gather}
  u^{0} = \left( \cos \left(\pi y +\alpha (y^2-y)\cos (\pi x) \right) +
  4\right) T_m e^{-t}
  \label{eq:J79a},
\end{gather}
where $\alpha $ is a numerical constant aimed to control the
variations of $b$. For $\alpha =0$, the limit solution represents a
solution for the constant $b$ case. The parameter $T_m$ is the scaling
of $u_0$.

Since $u^{0}$ is a limit solution, it is constant along the $b$
field lines. Therefore we can determine the $b$ field using the
following implication
\begin{gather}
  \nabla_{\parallel} u^{0} = 0 \quad \Rightarrow \quad
  b_x \frac{\partial u^{0}}{\partial x} +
  b_y \frac{\partial u^{0}}{\partial y} = 0\,,
  \label{eq:J89a}
\end{gather}
which yields for example
\begin{gather}
  b = \frac{B}{|B|}\, , \quad
  B =
  \left(
  \begin{array}{c}
    \alpha  (2y-1) \cos (\pi x) + \pi \\
    \pi \alpha  (y^2-y) \sin (\pi x)
  \end{array}
  \right)
  \label{eq:J99a}\,\quad. 
\end{gather}
Note that the field $B$, constructed in this way, satisfies
$\text{div} B = 0$, which is an important property in the framework of
plasma simulations. Furthermore, we have $B \neq 0$ in the
computational domain. Now, we choose $u^\varepsilon $ to be a function
that converges, as $\varepsilon \rightarrow 0$, to the limit solution
$u^{0}$, for example
\begin{gather}
  p = \left( \cos \left(\pi y +\alpha (y^2-y)\cos (\pi x) \right) +
  4\right) T_m e^{-t}\\
  q = p^{-3/2} \sin (3\pi x) /3\pi \\
  u = p + \eps q
  \label{eq:Jrbb}.
\end{gather}
In our simulations we set $\alpha =1$ so that the direction of the
anisotropy is variable in the computational domain.
Note that we have
\begin{gather}
  -\frac{1}{\eps} u^{5/2}(t,0) \nabla_\parallel u(t,0) = u(t,0) \,, \\
   \frac{1}{\eps} u^{5/2}(t,1) \nabla_\parallel u(t,1) = u(t,1) 
  \label{eq:Jsbb}.
\end{gather}
The problem is supplied with a force term computed accordingly.

As an initial condition we take $u(t=0)$, with $u$ defined by
(\ref{eq:Jrbb}). In this setting we expect both Asymptotic-Preserving
methods ($E_{AP}$) and ($RK_{AP}$) to converge in the optimal rate,
independently on $\eps$ and $b$.

First we test the space convergence of the methods. To do this we
choose a small time step such that the time discretization error is
much smaller than the space discretization error. We then vary the
mesh size and perform simulations for 100 time steps. The results are
summarized in Table \ref{tab:conv_h1} and Figure
\ref{fig:conv_h1}. All three methods give as expected the third order
space convergence in the $L_2$-norm for large values of
$\eps$. Moreover, due to the extremely small time step, the numerical
precision is the same, even if one uses second or first order
methods. For small values of $\eps$ only the Asymptotic Preserving
schemes give good numerical solutions.

\def\xxxa{0.45\textwidth}
\begin{figure}[!ht]
  \centering
  \subfigure[$h = 0.1$]
  {\includegraphics[angle=0,width=\xxxa]{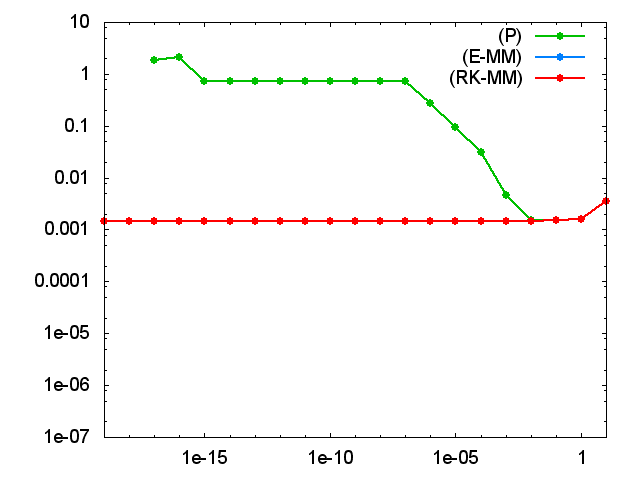}}
  \subfigure[$h = 0.00625$]
  {\includegraphics[angle=0,width=\xxxa]{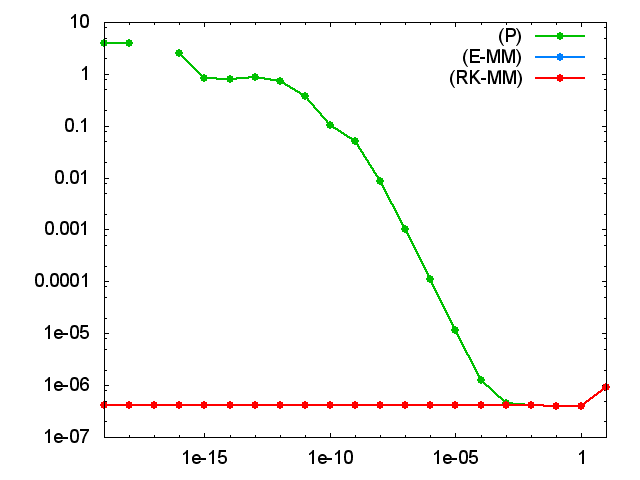}}

  \caption{Relative $L^{2}$-errors between the exact
    solution $u^{\varepsilon }$ and the computed solution for the standard
    scheme (P), Euler AP method ($E_{AP}$) and DIRK AP scheme ($RK_{AP}$)
    as a function of $\eps$
    and for $h = 0.1$ resp.  $h=0.00625$. The time step is $\tau=10^{-6}$.}
  \label{fig:conv_h1}
\end{figure}

\begin{table}
  \centering
  \begin{tabular}{|c||c|c|c|}
    \hline
    \multirow{2}{*}{$h$}  
    & \multicolumn{3}{|c|}{\rule{0pt}{2.5ex}$L^2$-error\qquad $\eps=1$} \\
    \cline{2-4} 
    &\rule{0pt}{2.5ex}
    P& $E_{AP}$& $RK_{AP}$\\
    \hline
    \hline\rule{0pt}{2.5ex}
    0.1 &
    $1.60\times 10^{-3}$ &
    $1.60\times 10^{-3}$ &
    $1.60\times 10^{-3}$ 
    \\
    \hline\rule{0pt}{2.5ex}
    0.05 &
    $2.02\times 10^{-4}$ &
    $2.02\times 10^{-4}$ &
    $2.02\times 10^{-4}$ 
    \\
    \hline\rule{0pt}{2.5ex}
    0.025 &
    $2.55\times 10^{-5}$ &
    $2.55\times 10^{-5}$ &
    $2.55\times 10^{-5}$ 
    \\
    \hline\rule{0pt}{2.5ex}
    0.0125 &
    $3.2\times 10^{-6}$ &
    $3.2\times 10^{-6}$ &
    $3.2\times 10^{-6}$ 
    \\
    \hline\rule{0pt}{2.5ex}
    0.00625 &
    $4.0\times 10^{-7}$ &
    $4.0\times 10^{-7}$ &
    $4.0\times 10^{-7}$ 
    \\
    \hline
  \end{tabular}
  \begin{tabular}{|c||c|c|c|}
    \hline
    \multirow{2}{*}{$h$}  
    & \multicolumn{3}{|c|}{\rule{0pt}{2.5ex}$L^2$-error\qquad $\eps=10^{-10}$} \\
    \cline{2-4} 
    &\rule{0pt}{2.5ex}
    P& $E_{AP}$& $RK_{AP}$\\
    \hline
    \hline\rule{0pt}{2.5ex}
    0.1 &
    $7.3\times 10^{-1}$ &
    $1.47\times 10^{-3}$ &
    $1.47\times 10^{-3}$ 
    \\
    \hline\rule{0pt}{2.5ex}
    0.05 &
    $7.3\times 10^{-1}$ &
    $2.04\times 10^{-4}$ &
    $2.04\times 10^{-4}$ 
    \\
    \hline\rule{0pt}{2.5ex}
    0.025 &
    $7.3\times 10^{-1}$ &
    $2.65\times 10^{-5}$ &
    $2.65\times 10^{-5}$ 
    \\
    \hline\rule{0pt}{2.5ex}
    0.0125 &
    $4.9\times 10^{-1}$ &
    $3.3\times 10^{-6}$ &
    $3.3\times 10^{-6}$ 
    \\
    \hline\rule{0pt}{2.5ex}
    0.00625 &
    $1.04\times 10^{-1}$ &
    $4.2\times 10^{-7}$ &
    $4.2\times 10^{-7}$ 
    \\
    \hline
  \end{tabular}
 
  \caption{The absolute error of $u$ in the $L^{2}$-norm for different mesh
    sizes and $\eps =1$ or $\eps =10^{-10}$, using the singular perturbation scheme (P) and the
    two proposed AP-schemes for a time step of $\tau = 10^{-6}s$ and at instant
    $t=10^{-4}$, with $T_m = 1$.  }
  \label{tab:conv_h1}
\end{table}

Finally we test the time convergence of the methods. To do this we
choose a small mesh size such that the space discretization error is
smaller than the time discretization error. We then vary the time step
and perform simulations on a fixed grid. The results are summarized in
Table \ref{tab:conv_t} and Figures \ref{fig:conv_t1} and
\ref{fig:conv_t2}. Note that the $(RK_{AP})$ scheme is of second order
in time as long as the error due to the time discretization dominates
the error induced by the space discretization. The standard (P)-scheme works well and is of first order, as long as $\eps$ is close
to one. The $(E_{AP})$ scheme is of first order for all values of the
anisotropic parameter. Also note that while the $(RK_{AP})$ scheme demands
twice more computational time than the $(E_{AP})$ scheme, it gives much
better precision. In order to achieve a relative error of the order of
$10^{-4}$ for $\eps=1$ it suffices to take a time step of $\tau =0.05$ in
the RK-scheme. A comparable accuracy with $(E_{AP})$ is obtained for a time
step 16 times smaller. In the case of $\eps = 10^{-10}$ the ratio is
32.

\def\xxxa{0.45\textwidth}
\begin{figure}[!ht]
  \centering
  {\includegraphics[angle=0,width=\xxxa]{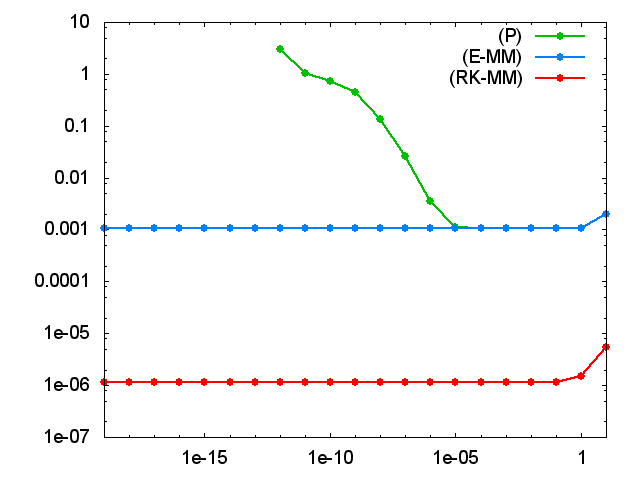}}

  \caption{Relative $L^{2}$-errors between the exact
    solution $u^{\varepsilon }$ and the computed solution with the standard
    scheme (P), the Euler-AP method ($E_{AP}$) and the DIRK-AP scheme ($RK_{AP}$)
    as a function of $\eps$ and for $\tau  = 0.00625$. The spacial grid is $200 \times 200$.}
  \label{fig:conv_t1}
\end{figure}

\def\xxxa{0.45\textwidth}
\begin{figure}[!ht]
  \centering
  \subfigure[$\eps=1$]
  {\includegraphics[angle=0,width=\xxxa]{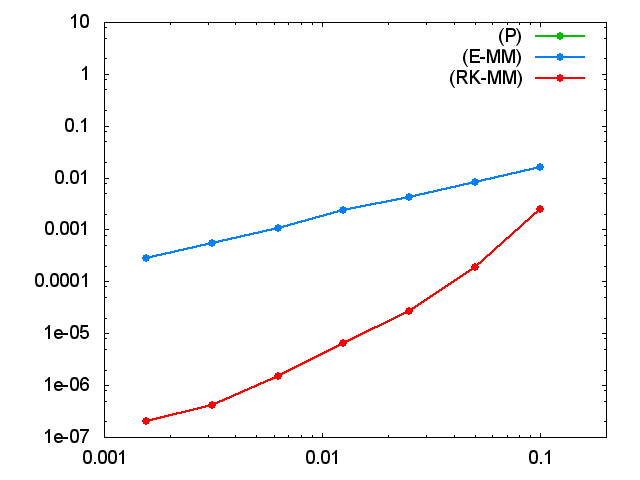}}
  \subfigure[$\eps=10^{-10}$]
  {\includegraphics[angle=0,width=\xxxa]{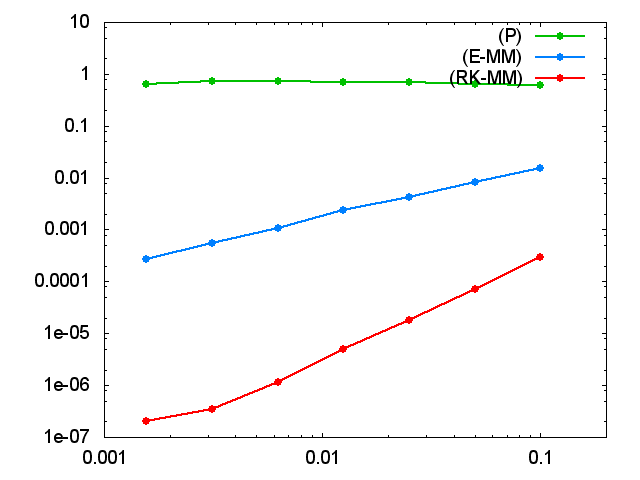}}

  \caption{Relative $L^{2}$-errors between the exact solution
    $u^{\varepsilon }$ and the computed solution with the standard scheme
    (P), the Euler-AP method ($E_{AP}$) and the DIRK-AP scheme ($RK_{AP}$) as a
    function of $\tau$, and for $\eps=1$ resp. $\eps=10^{-10}$ and a mesh with
    $200\times 200$ points. Note that for $\eps =1$ the P-scheme and the $E_{AP}$
    scheme give the same precision.}
  \label{fig:conv_t2}
\end{figure}

\begin{table}
  \centering
  \begin{tabular}{|c||c|c|c|}
    \hline
    \multirow{2}{*}{$\tau $}  
    & \multicolumn{3}{|c|}{\rule{0pt}{2.5ex}$L^2$-error\qquad $\eps=1$} \\
    \cline{2-4} 
    &\rule{0pt}{2.5ex}
    P& $E_{AP}$& $RK_{AP}$\\
    \hline
    \hline\rule{0pt}{2.5ex}
    0.1 &$1.57\times 10^{-2} $&$ 1.57\times 10^{-2} $&$ 2.52\times 10^{-3} $
    \\
    \hline\rule{0pt}{2.5ex}
    0.05 &$8.28\times 10^{-3} $&$ 8.28\times 10^{-3} $&$ 1.93\times 10^{-4} $ 
    \\
    \hline\rule{0pt}{2.5ex}
    0.025 &$4.25\times 10^{-3} $&$ 4.25\times 10^{-3} $&$ 2.62\times 10^{-5} $
    \\
    \hline\rule{0pt}{2.5ex}
    0.0125 &$2.37\times 10^{-3} $&$ 2.37\times 10^{-3} $&$ 6.54\times 10^{-6} $
    \\
    \hline\rule{0pt}{2.5ex}
    0.00625 &$1.08\times 10^{-3} $&$ 1.08\times 10^{-3} $&$ 1.50\times 10^{-6} $
    \\
    \hline\rule{0pt}{2.5ex}
    0.003125 &$5.44\times 10^{-4} $&$ 5.44\times 10^{-4} $&$ 4.08\times 10^{-7} $
    \\
    \hline\rule{0pt}{2.5ex}
    0.0015625 &$2.76\times 10^{-4} $&$ 2.76\times 10^{-4} $&$ 2.07\times 10^{-7} $
    \\
    \hline
  \end{tabular}
  \begin{tabular}{|c||c|c|c|}
    \hline
    \multirow{2}{*}{$\tau $}  
    & \multicolumn{3}{|c|}{\rule{0pt}{2.5ex}$L^2$-error\qquad $\eps=10^{-10}$} \\
    \cline{2-4} 
    &\rule{0pt}{2.5ex}
    P& $E_{AP}$& $RK_{AP}$\\
    \hline
    \hline\rule{0pt}{2.5ex}
    0.1 &$6.14\times 10^{-1} $&$ 1.57\times 10^{-2} $&$ 2.90\times 10^{-4} $
    \\
    \hline\rule{0pt}{2.5ex}
    0.05 &$6.30\times 10^{-1} $&$ 8.22\times 10^{-3} $&$ 7.21\times 10^{-5} $
    \\
    \hline\rule{0pt}{2.5ex}
    0.025 &$6.92\times 10^{-1} $&$ 4.22\times 10^{-3} $&$ 1.80\times 10^{-5} $
    \\
    \hline\rule{0pt}{2.5ex}
    0.0125 &$7.08\times 10^{-1} $&$ 2.36\times 10^{-3} $&$ 4.91\times 10^{-6} $
    \\
    \hline\rule{0pt}{2.5ex}
    0.00625 &$7.26\times 10^{-1} $&$ 1.08\times 10^{-3} $&$ 1.15\times 10^{-6} $
    \\
    \hline\rule{0pt}{2.5ex}
    0.003125 &$7.42\times 10^{-1} $&$ 5.40\times 10^{-4} $&$ 3.43\times 10^{-7} $
    \\
    \hline\rule{0pt}{2.5ex}
    0.0015625 &$6.42\times 10^{-1} $&$ 2.74\times 10^{-4} $&$ 2.05\times 10^{-7} $
    \\
    \hline
  \end{tabular}
  \caption{The absolute error of $u$ in the $L^{2}$-norm 
    for different time step using the singular
    perturbation scheme (P) and two proposed AP-schemes for mesh size
    $200\times 200$ at time $t=0.1s$ with $T_m = 1$.
  }
  \label{tab:conv_t}
\end{table}

To conclude, one can remark that the asymptotic-preserving schemes,
($E_{AP}$) and ($RK_{AP}$), are uniformly accurate with respect to the
perturbation parameter $\eps$. This essential feature can be very
useful in situations where the anisotropy  is variable in space, {\it
  i.e.} the parameter $\eps(x)$ is $x$-dependent. No mesh-adaptation
is any more needed in these cases, a simple Cartesian grid enables
accurate results, with no regard to the $\eps$-values.

\subsubsection{Initial Gaussian peak}\label{sec:gauss}
\quad\\

The second investigated test is the evolution of the following initial
Gaussian peak, located in the middle of the computational domain:
\begin{gather}
  u(t=0) =
  {T_m\over 2}
  \left(
  1 + e^{-50 (x-0.5)^2-50(y-0.5)^2}
  \right)\,,
  \label{eq:Jucb}
\end{gather}
where $T_m=10^5\,K$ is the maximal temperature in the domain and the
anisotropy direction is given as in the previous tests. We perform
numerical experiments with the choice of $\eps =1$. We choose the time
step $\tau =0.01$ and perform numerical simulations on a fixed
$50\times 50$ grid with the final time set to $15s$. The time step is
big compared to the time scale induced by the initial
condition. Indeed, after the first iteration of the algorithm the
numerical solution immediately falls into the space of functions which
are almost constant in the direction of the anisotropy (see Figure
\ref{fig:gauss1}).  The evolution of the numerical solution consists
of two phases. The first one, in which the parallel components of the
diffusion operator dominate, is characterized by the exponential decay
of $||u_h||_{L_2(\Omega )}$, $\min(u_h)$ and $\max(u_h)$ (see Figure
\ref{fig:gauss}). When $u_h$ reaches some critical value, the parallel
part of the diffusion operator becomes smaller than the perpendicular
one. The direction of the strong diffusion is now inverted and the
numerical solution aligns itself rather with the perpendicular
direction. The minimum, maximum as well as the $L^2$-norm of $u_h$
continue to approach zero, but the decay is no longer exponential. The
$L^2$-norm and the maximal value remain close to each other and almost
constant in time. The minimal value of $u_h$, as well as the
boundary-values decrease much faster.

\def\xxxa{0.45\textwidth}
\begin{figure}[!ht]
  \centering
  {\includegraphics[angle=0,width=\xxxa]{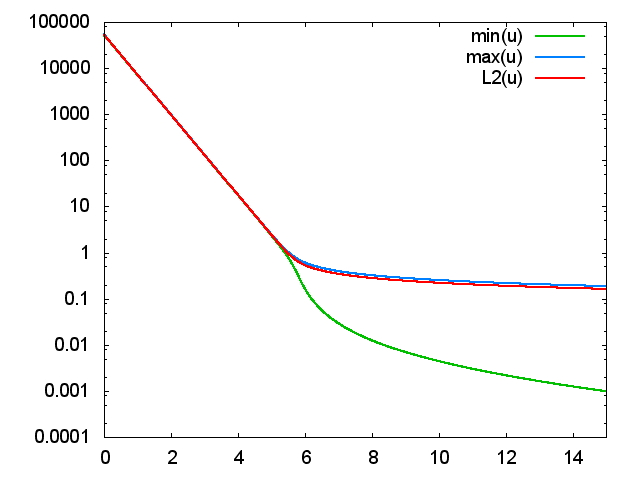}}

  \caption{$\min(u_h)$, $\max(u_h)$ and $||u_h||_{L^2(\Omega )}$ as a
    function of time for the Gaussian peak experiment, for $T_m = 10^5$
    and $\eps=1$. Time step is $\tau=0.01s$ and a mesh size of $50\times 50$. }
  \label{fig:gauss}
\end{figure}

\section{Conclusion}\label{SEC6}

The here presented Asymptotic-Preserving scheme proves to be an
efficient, general and easy to implement numerical method for solving
nonlinear, strongly anisotropic parabolic problems. This kind of
problems occur in several important applications, as for example
magnetically confined fusion plasmas. The method is based on a
reformulation of the problem, initially introduced by the
authors in an elliptic framework, and a careful linearization as well
as time-discretization of the resulting equation, which does not
destroy the AP-properties of the space-discretization. Numerical
experiments show clearly the advantages of such an AP-scheme.

\section*{Acknowledgments}
The authors would like to thank Michel Pierre and Giacomo Dimarco for
useful discussions. This work has been partially supported by the ANR
project ESPOIR (Edge Simulation of the Physics Of Iter Relevant
turbulent transport, 2009-2013) and the ANR project BOOST (Building
the future Of numerical methOdS for iTer, 2010-2014).
 
\newpage
\vfill
\def\xxxa{0.35\textwidth}
\begin{figure}[H]
  \centering
  \subfigure[$t = 0$]
  {\includegraphics[angle=0,width=\xxxa]{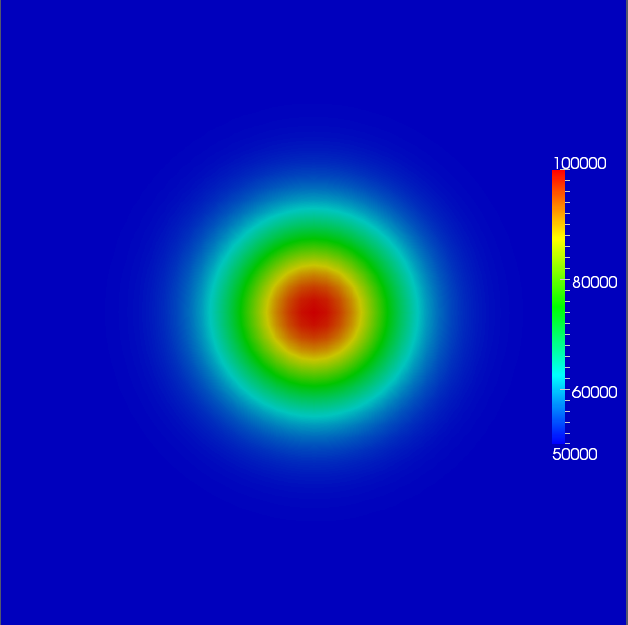}}
  \subfigure[$t = 0.01$]
  {\includegraphics[angle=0,width=\xxxa]{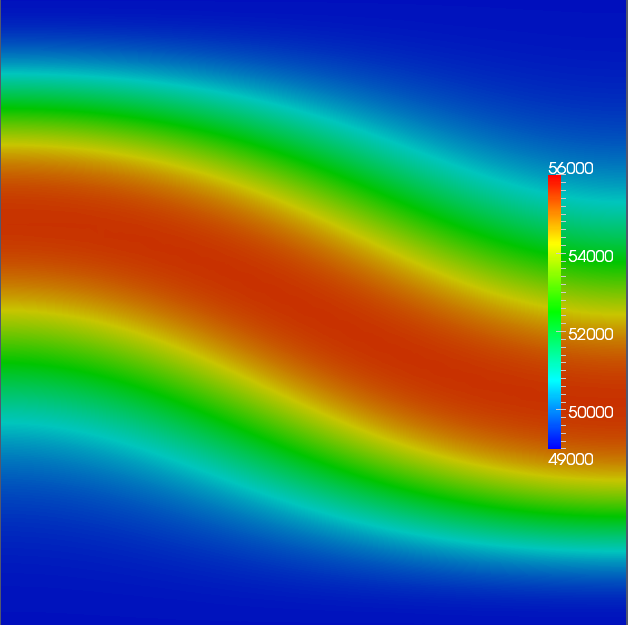}}
  \subfigure[$t = 4.5$]
  {\includegraphics[angle=0,width=\xxxa]{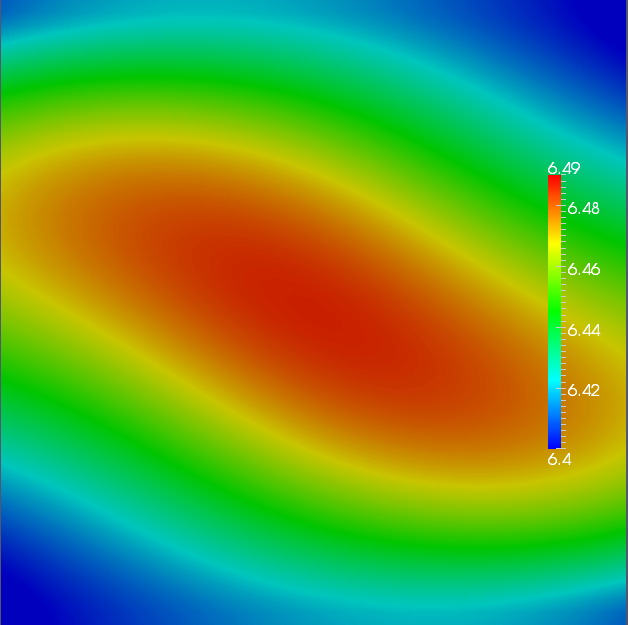}}
  \subfigure[$t = 4.75$]
  {\includegraphics[angle=0,width=\xxxa]{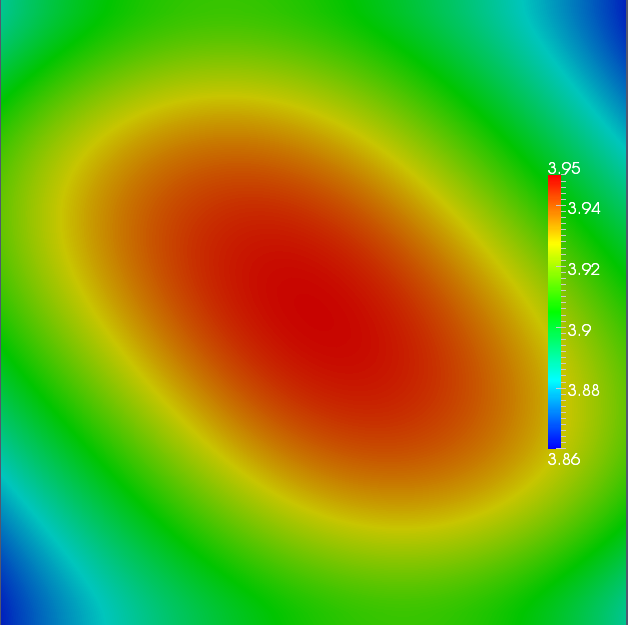}}
  \subfigure[$t = 5$]
  {\includegraphics[angle=0,width=\xxxa]{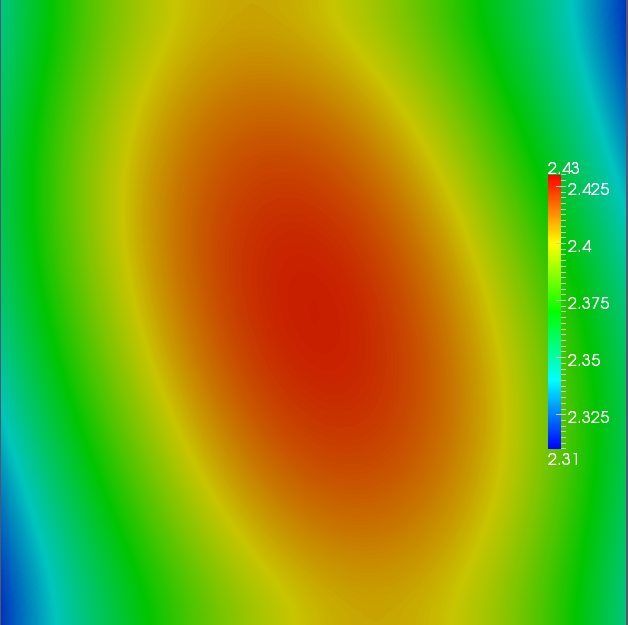}}
  \subfigure[$t = 6$]
  {\includegraphics[angle=0,width=\xxxa]{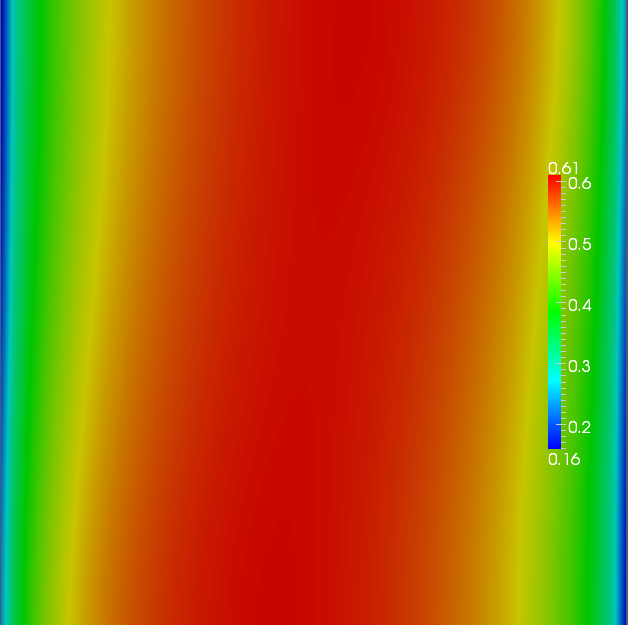}}

  \caption{Numerical solution at different time steps
    for the Gaussian peak experiment, for $T_m = 10^5$
    and $\eps=1$. Time step is $\tau=0.01s$ and a mesh size of $50\times 50$. }
  \label{fig:gauss1}
\end{figure}
\newpage
\bibliographystyle{abbrv}
\bibliography{bib_aniso}

\end{document}